\def\msection{\@startsection{section} 
{1} 
{0pt} 
{-1ex plus -.1ex minus -0.9ex} 
{-.9ex plus -.2ex} 
{\bfseries} 
}
\def\msubsection{\@startsection{subsection} 
{2} 
{0pt} 
{-1ex plus -.1ex minus -0.2ex} 
{-.9ex plus -.2ex} 
{\normalfont} 
} 
\newcommand{\spec}{\mathfrak{P}(L)}
\newcommand{\mx}{\mathfrak{M}(L)}
\newcommand{\jr}{\mathsf{j}_L}
\newcommand{\z}{\mathfrak{B}(L)}
\newcommand{\cm}{\mathfrak{C}(L)}
\newcommand{\p}{^{\perp}}
\newcommand{\pp}{^{\perp\perp}}
\DeclareMathOperator{\cz}{\mathfrak{c}}
\numberwithin{equation}{section}
\newtheorem{theorem}[equation]{Theorem}
\newtheorem{proposition}[equation]{Proposition}
\newtheorem{lemma}[equation]{Lemma}
\newtheorem{corollary}[equation]{Corollary}
\theoremstyle{definition}
\newtheorem{definition}[equation]{Definition}
\newtheorem{example}[equation]{Example}
\newtheorem{remark}[equation]{Remark}
\begin{document}
	
\author{Amartya Goswami  
}

\address{[1] Department of Mathematics and Applied Mathematics, University of Johannesburg.
[2] National Institute for Theoretical and Computational Sciences (NITheCS).
}

\email{agoswami@uj.ac.za}

\author{Themba Dube}

\address{[1] Department of Mathematical Sciences, University of South Africa.
[2] National Institute for Theoretical and Computational Sciences (NITheCS).
}
\email{dubeta@unisa.ac.za}

\title{Revisiting  Baer elements}

\subjclass{06F99, 06B23}



%
\keywords{multiplicative lattice, compact element, annihilator element,  Baer element, Baer closure.}

\maketitle

\begin{abstract}
The objective of this paper is to extend certain properties observed in $d$-ideals of rings and $d$-elements of frames to Baer elements in multiplicative lattices introduced in D. D. Anderson, C. Jayaram, and P. A. Phiri, Baer lattices, \textit{Acta Sci. Math. (Szeged)},
59 (1994), 61--74. Additionally, we present results concerning these elements that have not been addressed in the study of $d$-ideals of rings. Furthermore, we introduce Baer closures and explore Baer maximal, prime, semiprime, and meet-irreducible elements.
\end{abstract}
\smallskip
  
\msection{Introduction}\label{intro}

Baer-ideals were first introduced in \cite{Spe97} for commutative Baer rings, where they were defined as the kernels of  Baer ring homomorphisms. Subsequently, in \cite{Jay84}, the study of Baer ideals was extended to semiprime commutative rings. In \cite{Con88}, the concept of such ideals was generalized to any commutative ring and termed ``$B$-ideals''. The term ``$d$-ideal'' in rings, not necessarily commutative, was introduced in \cite{Mas89}, with the terminology adapted from the series of papers \cite{HP80, HP80', Pag81} focusing on $d$-ideals of Riesz spaces. Furthermore, in \cite{AKA99}, a study was conducted on some properties of $d$-ideals in rings $C(X)$, referred to as $z^0$-ideals. Also, $d$-ideals have been examined for reduced rings under the same name in \cite{AKA00}.

The initial effort to extend $d$-ideals to ``$d$-elements'' in lattices was undertaken in \cite{MZ03} for frames, once again borrowing terminology from \cite{HP80}. Subsequent research on $d$-elements can be found in \cite{DS19} and \cite{Bha19}, where the second focused on maximal $d$-elements. It was in \cite{AJP94} where $d$-elements were further extended to encompass more general types of multiplicative lattices, not necessarily frames, and labelled Baer elements.

Since the introduction of Baer elements in \cite{AJP94} for multiplicative lattices, several studies have been conducted on $d$-ideals in rings and $d$-elements in frames, as can be seen above. All of these studies may warrant scrutiny for further extensions to multiplicative lattices in the sense of \cite{AJP94}. Furthermore, there are several properties of $d$-ideals of rings studied in earlier literature that have not been fully captured in their extended form in \cite{AJP94}. Our modest aim in this paper is to extend some of those results to Baer elements. Moreover, we shall include some aspects of Baer elements that, to the best of our knowledge, have not even been studied for $d$-ideals in rings per se.

Let us now briefly describe the content of this paper. 
In Section \ref{prlm}, we gather the multiplicative lattice theoretic machinery needed throughout this paper. All of the material presented is standard, with references including \cite{Dil36, War37, WD39, Dil62, And74, And76}.

In Section \ref{zid}, we present various equivalent characterizations and basic properties of Baer elements. We provide a necessary and sufficient condition for the closedness of Baer elements under finite products (Theorem \ref{cup}), characterize lattices where every Baer element is a $z$-element (Theorem \ref{beiz}), characterize domains (Theorem \ref{cdbe}), and discuss several relations between prime, maximal, and Baer elements (Proposition \ref{pmb}). We also give a sufficient condition for the contraction of Baer elements under a multiplicative lattice homomorphism (Theorem \ref{bep1}).

In Section \ref{bcl}, we introduce Baer closures and discuss some properties of them (Proposition \ref{lclk}). We provide some equivalent criteria on closedness of Baer elements under joins (Proposition \ref{cujo}). We view Baer closures as nuclei of various types, and apply them in studying some distinguished classes of Baer elements, namely maximal, prime, semiprime, and meet-irreducible.

\smallskip

\msection{Preliminaries} \label{prlm}
 
From \cite{Dil62}, recall that a \emph{multiplicative lattice} is a complete lattice $(L, \leqslant, 0, 1)$ endowed with an associative, commutative multiplication (denoted by $\cdot$), which distributes over arbitrary joins and has $1$ as multiplicative identity. Note that  in the sense of \cite{Mul86}, a multiplicative lattice is a commutative, unital quantale.
For brevity, we shall write $xy$ for $x\cdot y$ and  $x^n$ for $x\cdot   \cdots \cdot   x$ (repeated $n$ times). We say $x$ is \emph{strictly below} $y$, denoted by $x<y$, whenever $x\leqslant y$ and $x\neq y$. Recall that a map $\phi\colon L \to L'$ is said to be a \emph{multiplicative lattice homomorphism} if $\phi$ preserves $\leqslant$, binary joins, binary meets, and multiplication.

For our later purposes, in the following lemma, we collect some elementary properties of multiplication of multiplicative lattices.

\begin{lemma}\label{bip}
In a multiplicative lattice $L$, the following hold.
\begin{enumerate}
	
\item\label{pxyx} $xy\leqslant x$, for all $x$, $y\in L.$
	
\item \label{mul}
$x  y\leqslant x\wedge y$, for all $x, y\in L$.
		
\item $x  0=0$, for all $x\in L$.
		
\item\label{mon} If $x\leqslant y$, then $x  z\leqslant y  z,$\, for all $x$, $y$, $z\in L$.
		
\item\label{monj} If $x\leqslant y$ and $u\leqslant v$, then $x  u\leqslant y v$, for all $x,$ $y,$ $u,$ $v\in L$.
\end{enumerate}
\end{lemma}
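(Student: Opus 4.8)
The plan is to prove each of the five items in order, since the later ones build on the earlier ones. Everything should follow from the three defining axioms of a multiplicative lattice: that multiplication distributes over arbitrary joins, that $1$ is the multiplicative identity, and that $L$ is a complete lattice with bottom $0$ and top $1$.

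First I would establish item (1), $xy \leqslant x$. The key observation is that $y \leqslant 1$ holds for every $y \in L$ since $1$ is the top element, so I expect to combine this with the identity $x\cdot 1 = x$. The cleanest route is to show multiplication is order-preserving in each argument first, or alternatively to derive monotonicity from join-distributivity directly: if $y \leqslant 1$ then $y \vee 1 = 1$, and distributivity gives $x\cdot 1 = x(y \vee 1) = xy \vee x\cdot 1$, whence $xy \leqslant x\cdot 1 = x$. I would then note that commutativity gives $xy \leqslant y$ as well. Item (2), $xy \leqslant x \wedge y$, is then immediate: from item (1) and its commutative counterpart we have $xy \leqslant x$ and $xy \leqslant y$, so $xy$ is a lower bound of $\{x,y\}$ and therefore $xy \leqslant x \wedge y$ by definition of the meet.

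For the fact that $x\cdot 0 = 0$, I would use that $0$ is the empty join: distributivity over arbitrary joins forces $x\cdot 0 = 0$, since multiplying by $x$ must send the empty supremum to the empty supremum. Alternatively this follows at once from item (1) applied with $y = 0$, giving $x\cdot 0 \leqslant 0$, together with $0 \leqslant x\cdot 0$. For item (4), monotonicity, suppose $x \leqslant y$, so $x \vee y = y$; then join-distributivity gives $yz = (x \vee y)z = xz \vee yz$, which shows $xz \leqslant yz$. Finally item (5) follows by composing two applications of item (4) together with commutativity: from $x \leqslant y$ we get $xu \leqslant yu$, and from $u \leqslant v$ we get $yu \leqslant yv$, so $xu \leqslant yv$ by transitivity.

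I do not anticipate any genuine obstacle here; these are the standard foundational consequences of the quantale axioms. The one point that requires a little care is the very first deduction of monotonicity (or equivalently item (1)) from the axioms, since everything else cascades from it via commutativity and transitivity. Once the single inequality $xy \leqslant x$ and monotonicity are in hand, items (2), (3), and (5) are routine. I would present item (4) early in practice, as it streamlines the derivation of item (1), but the statement lists them in a logically self-contained order and each can be read off from the distributive law directly.
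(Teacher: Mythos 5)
Your proof is correct, and there is nothing in the paper to compare it against: the paper states Lemma \ref{bip} without proof, as standard background material, so your derivation from the quantale axioms (monotonicity and item (1) via join-distributivity applied to $y \vee 1 = 1$ and $x \vee y = y$, with the remaining items cascading by commutativity and transitivity) is exactly the standard argument the paper implicitly relies on. One pedantic note: your shortcut ``item (1) applied with $y=0$ gives $x\cdot 0 \leqslant 0$'' actually invokes the commutative counterpart $xy \leqslant y$ rather than $xy \leqslant x$ itself, but since you established that counterpart in the same breath, the step is sound.
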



Let us now recall a few definitions. An element $a$ in $L$ is called \emph{nilpotent} if we have $a^n=0$ for some $n\in \mathds{N}^+$ (where $\mathds{N}^+$ is the set of positive natural numbers ), and $L$ is said to be \emph{reduced} if $0$ is the only nilpotent element in $L$. An element $c$ in $L$ is called \emph{compact}, if whenever  $\{x_{\lambda}\}_{\lambda \in \Lambda}\subseteq L$, and $c\leqslant \bigvee_{\lambda \in \Lambda} x_{\lambda}$, we have $c\leqslant \bigvee_{i=1}^n x_{\lambda_i}$ for some $n\in \mathds{N}^+$, and  $L$ is said to be \emph{compactly generated} if every element of
$L$ is the join of compact elements in $L$. We shall denote by $\cm$ the set of compact elements in $L$.

Let us pause with the recollection of our definitions in order to declare our underlying assumptions on lattices. For the rest of the paper, we will only work with  reduced, compactly generated multiplicative lattices where $1$ is compact and
every finite product of compact elements is compact.

An element $x$ in $L$ is called \emph{proper} if ${x<1}$. A proper element $m$ in $L$ is said to be \emph{maximal} if, whenever $x\in L$, $m \leqslant x$, and $x< 1$, we have $m = x$.   We shall denote by $\mx$ the set of all maximal elements in $L$. The following well-known result confirms that $\mx$  is nonempty.

\begin{lemma}\label{flm}
In a multiplicative lattice $L$, if the top element $1$ in $L$ is compact, then for every proper element $a$ in $L$, there exists an $m\in \mx$ such that $a\leqslant m$.
\end{lemma}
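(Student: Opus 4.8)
The plan is to apply Zorn's lemma to the poset of proper elements lying above $a$. Fix a proper element $a$, so $a<1$, and set
\[
P=\{x\in L : a\leqslant x \text{ and } x<1\},
\]
ordered by the restriction of $\leqslant$. Since $a\in P$, the poset $P$ is nonempty. I would first observe that a maximal element of the poset $P$ is precisely a maximal element of $L$ dominating $a$: if $m$ is maximal in $P$ and $m\leqslant x<1$, then $x\in P$ with $m\leqslant x$, whence $m=x$ by maximality; this is exactly the defining property of membership in $\mx$, and $a\leqslant m$ holds by construction. So it suffices to produce a maximal element of $P$.

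It therefore remains to verify the chain hypothesis of Zorn's lemma. Let $C\subseteq P$ be a nonempty chain and put $c=\bigvee C$, the join computed in the complete lattice $L$. Clearly $a\leqslant c$, since $a$ lies below every member of $C$. The key point, and the step I expect to be the only real obstacle, is to show that $c$ is still proper, i.e. $c<1$; this is exactly where the hypothesis that $1$ is compact is used, and without it the argument would fail.

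To establish properness, suppose toward a contradiction that $c=1$. Then $1\leqslant\bigvee C$, and since $1$ is compact there is a finite subfamily $c_1,\dots,c_n\in C$ with $1\leqslant c_1\vee\cdots\vee c_n$. Because $C$ is a chain, this finite join coincides with its largest member, say $c_k$, so $1\leqslant c_k$, forcing $c_k=1$ and contradicting $c_k<1$. Hence $c<1$, so $c\in P$ is an upper bound for $C$. Zorn's lemma then yields a maximal element $m$ of $P$, and by the first paragraph $m\in\mx$ with $a\leqslant m$, which completes the proof.
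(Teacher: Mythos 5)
Your proof is correct: the paper states this lemma as well known and gives no proof, and your argument --- Zorn's lemma on the poset of proper elements above $a$, with compactness of $1$ used precisely to show the join of a nonempty chain of proper elements stays proper --- is exactly the standard argument the paper is implicitly invoking. You also handle the two points where such proofs usually get sloppy (identifying maximal elements of $P$ with elements of $\mx$ via the paper's definition, and reducing the finite subjoin to the largest member of the chain), so nothing is missing.
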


A proper element $p$ in $L$ is called \emph{prime} if, whenever $x$, $y\in L$, $x y\leqslant p$, we have  either $x\leqslant p$ or $y\leqslant p$. We shall denote by $\spec$ the set of prime elements in $L$. An element $p$ in $L$ is called a \emph{minimal prime} if there is no
prime element $q$ in $L$ with $q < p$. An element $q$ in  $L$ is said to be \emph{semiprime} if, whenever $x\in L$, and $x^2\leqslant q$, we have  $x\leqslant q$. 

Let us recall that an element $s$ in  $L$ is called \emph{meet-irreducible} if, whenever $x$, $y\in L$, and $x\wedge y \leqslant s$, we have  either $x\leqslant s$ or $y\leqslant s$. An element $x$ in $L$ is said to be a \emph{zero divisor} if there exists a non-zero element $y$ in $L$ such that $xy=0$, and $L$ is said to be a \emph{domain} if $L$ does not have any non-zero zero divisor.

According to  \cite{AAJ95}, the \emph{radical  of an element} $x$ in $L$ is defined as
\[\sqrt{x}:=\bigvee\left\{y\in L\mid \;y\;\text{is compact},\;y^n\leqslant x,\;\text{for some}\; n\in \mathds{N}^+\right\}.\]
It is then shown that
\begin{align}
\label{empp}\tag{$\dagger$}\sqrt{x}&=\bigwedge\left\{p\in \spec\mid p\;\text{is prime over}\; x\right\}\\
&=\bigwedge\left\{p\in \spec\mid p\;\text{is minimal prime over}\; x\right\}\nonumber,	
\end{align}
and  $x$ is called a \emph{radical element} whenever $x=\sqrt{x}$. The \emph{Jacobson radical} of a multiplicative lattice $L$ is defined as \[\jr:=\bigwedge\left\{m\mid m\in \mx \right\},\]  and $L$ is said to be \emph{semisimple} if $\jr=0$. 
Recall that  for any two elements $a$ and  $b$ in $L$,  the \emph{residual} of $a$ by $b$ is defined as \[(a:b):=\bigvee\left\{ x\in L\mid xb\leqslant a\right\},\] and  the \emph{annihilator} of an element $a$ in $L$ is defined as \[a\p:=\bigvee\left\{ x\in L\mid xa=0\right\}.\]
To denote the annihilator of the annihilator $a\p$  of an element $a$, instead of $(a\p)\p$, we shall use the notation $a\pp$. 
Let us record some well-known properties of annihilators of elements in multiplicative lattices that will be useful in sequel.

\begin{lemma}\label{pae}
Suppose $L$ is a multiplicative lattice and $a$, $b$, $c$ are elements in $L$. Then the following hold.
\begin{enumerate}
\item\label{mdp} If $a\leqslant b$, then $b\p\leqslant a\p$.
		
\item\label{cpp} $a\leqslant a\pp$.
		
\item\label{tap} $a^{\perp \perp\perp}=a\p.$
		
\item\label{dam} $a\pp\wedge b\pp=(ab)\pp.$
		
\item \label{mip}
If $a\p\leqslant b\p$, then $(ac)\p \leqslant (bc)\p.$
\end{enumerate}	
\end{lemma}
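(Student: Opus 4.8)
The plan is to first isolate the single workhorse fact that $a\p$ is not merely the join of the annihilating elements but is itself the largest such element. Since multiplication distributes over arbitrary joins, $a\p a = \bigvee\{xa \mid xa = 0\} = 0$, and combining this with monotonicity (Lemma \ref{bip}(\ref{mon})) yields the equivalence $xa = 0 \iff x \leqslant a\p$ for every $x$. Every item of the lemma will reduce to manipulating this equivalence together with associativity and commutativity of the multiplication.

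With this in hand, (1) is immediate: from $a \leqslant b$ and $b\p b = 0$ one gets $b\p a \leqslant b\p b = 0$, so $b\p \leqslant a\p$. For (2), commutativity gives $a\, a\p = 0$, which by the equivalence means $a \leqslant (a\p)\p = a\pp$. Part (3) then combines these: applying (2) to $a\p$ gives $a\p \leqslant a^{\perp\perp\perp}$, while applying the antitone operation $\perp$ (justified by (1)) to $a \leqslant a\pp$ yields $a^{\perp\perp\perp} \leqslant a\p$. Part (5) is a direct chase: writing $x = (ac)\p$, the relation $x(ac) = 0$ rearranges to $(xc)a = 0$, so $xc \leqslant a\p \leqslant b\p$; then $(xc)b = 0$ rearranges to $x(bc) = 0$, giving $x \leqslant (bc)\p$.

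The substantive item is (4). One inclusion is formal: since $ab \leqslant a$ and $ab \leqslant b$ (Lemma \ref{bip}(\ref{mul})), applying $\perp$ twice (a monotone operation) gives $(ab)\pp \leqslant a\pp \wedge b\pp$. I would prove the reverse inclusion $a\pp \wedge b\pp \leqslant (ab)\pp$ by setting $w = a\pp \wedge b\pp$ and $t = (ab)\p$ and showing $wt = 0$, which yields $w \leqslant t\p = (ab)\pp$. From $t(ab) = 0$ one gets $ta \leqslant b\p$, whence $w(ta) \leqslant b\pp b\p = 0$ by Lemma \ref{bip}(\ref{monj}), so $(wt)a = 0$ and thus $wt \leqslant a\p$; on the other hand $wt \leqslant w \leqslant a\pp$. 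Therefore $(wt)^2 \leqslant a\p a\pp = 0$.

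This last step is exactly where the main obstacle lies and where the standing hypothesis that $L$ is reduced becomes essential: having produced an element $wt$ with $(wt)^2 = 0$, I invoke reducedness to conclude $wt = 0$. Without reducedness the double-annihilator identity (4) can fail, so the nilpotency argument is the genuine crux of the lemma; the remaining bookkeeping is routine application of the workhorse equivalence.
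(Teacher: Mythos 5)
The paper records Lemma \ref{pae} without proof, citing it as well-known, so there is no in-paper argument to compare against; judged on its own, your proof is correct and is the standard one. The workhorse equivalence $xa=0 \iff x\leqslant a\p$ (via distributivity of the product over arbitrary joins) is the right reduction; items (1)--(3) and (5) follow from it exactly as you say, and your argument for the nontrivial inclusion $a\pp\wedge b\pp\leqslant (ab)\pp$ in (4) --- producing $wt$ with $(wt)^2\leqslant a\p a\pp=0$ and invoking the paper's standing reducedness hypothesis --- is sound. Your closing remark that reducedness is genuinely needed for (4) also checks out: in the multiplicative lattice of ideals of $k[x]/(x^2)$, taking $a=b=(x)$ gives $a\pp\wedge b\pp=(x)$ while $(ab)\pp=0\pp=1\p=0$, so (4) fails there; flagging this is a worthwhile addition, since the paper's blanket assumptions quietly do that work.
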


We shall denote by $\mathfrak{A}(L)$ the set of all annihilators of $L$. That is,
\[
\mathfrak{A}(L)= \{x^\perp\mid x\in L\} =\{y^{\perp\perp} \mid y\in L\}.
\]
It is proper to comment that, as in frames, with the partial order inherited from $L$, $\mathfrak{A}(L)$ is a Boolean algebra.

\smallskip

\msection{Baer elements}
\label{zid}

According to \cite[Definition 1]{AJP94}, an element $a$ in a multiplicative lattice $L$ is called a \emph{Baer element}  if, whenever $c\in \cm$, and $c\leqslant a$, we have  $c\pp\leqslant a$. We shall denote by $\z$ the set of all Baer elements in $L$. 

In the next proposition, we gather some elementary results on Baer elements. 

\begin{proposition}
\label{epbe}
In a multiplicative lattice $L$, then  the following hold.
\begin{enumerate}
\item \label{ald}
An element $b$ in $L$ is a Baer element if and only if, whenever $u$, $v\in \cm$, $u\p=v\p$, and $u\leqslant b$, we have $v\leqslant b$.

\item\label{ebre} Every Baer element in $L$ is a radical element.

\item\label{mtp} If $b$ is a Baer element in $L$, then so is every minimal prime element above $b$.

\item\label{bepb} Every Baer element in $L$ is the meet of all prime Baer elements above it.

\item\label{zao} Both $0$ and $1$ in $L$ are Baer elements.

\item\label{ijam} If $\{x_{i}\}_{i\in I}$ are Baer elements in $L$, then so is $\bigwedge_{i\in I} x_i$.
\end{enumerate}
\end{proposition}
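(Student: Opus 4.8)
The plan is to prove the six items in an order that exploits their logical dependence: item (\ref{ald}) is the technical engine behind (\ref{ebre}) and (\ref{mtp}), and (\ref{bepb}) is then immediate from (\ref{ebre}), (\ref{mtp}) and the radical formula (\ref{empp}). The two trivial items I would clear first. For (\ref{ijam}), if $c\in\cm$ and $c\leqslant\bigwedge_{i\in I}x_i$, then $c\leqslant x_i$ for every $i$, so the Baer property of each $x_i$ gives $c\pp\leqslant x_i$, whence $c\pp\leqslant\bigwedge_{i\in I}x_i$. For (\ref{zao}), the only compact element below $0$ is $0$ itself and $0\pp=0$, while every element lies below $1$; so both conditions hold vacuously.

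For (\ref{ald}), the forward direction is quick: $u\p=v\p$ yields $u\pp=v\pp$, so $v\leqslant v\pp=u\pp$ by Lemma \ref{pae}(\ref{cpp}), and if $b$ is Baer with $u\leqslant b$ then $u\pp\leqslant b$, whence $v\leqslant b$. The converse is the heart of this item. Given compact $c\leqslant b$, I want $c\pp\leqslant b$, and since $L$ is compactly generated it suffices to prove $w\leqslant b$ for each compact $w\leqslant c\pp$. The trick is to compare $c$ with $c\vee w$, which is again compact (finite joins of compacts are compact). From $w\leqslant c\pp$ and Lemma \ref{pae}(\ref{mdp}),(\ref{tap}) one gets $c\p=c^{\perp\perp\perp}\leqslant w\p$, so the routine identity $(c\vee w)\p=c\p\wedge w\p$ collapses to $(c\vee w)\p=c\p$. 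Applying the hypothesis to $u=c$ and $v=c\vee w$, the relation $c\leqslant b$ forces $c\vee w\leqslant b$, and therefore $w\leqslant b$.

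Item (\ref{ebre}) then follows at once. As $L$ is compactly generated one always has $b\leqslant\sqrt b$, so I only need $\sqrt b\leqslant b$, i.e.\ that every compact $y$ with $y^n\leqslant b$ lies below $b$. Such $y^n$ is compact (finite products of compacts are compact), and repeated use of Lemma \ref{pae}(\ref{dam}) gives $(y^n)\pp=y\pp$, hence $(y^n)\p=y\p$ by Lemma \ref{pae}(\ref{tap}). Applying the criterion of (\ref{ald}) with $u=y^n$ and $v=y$ yields $y\leqslant b$, so $\sqrt b=b$.

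The real obstacle is (\ref{mtp}). Let $p$ be a minimal prime above the Baer element $b$ and let $c\in\cm$ with $c\leqslant p$. I would invoke the standard description of minimal primes in a compactly generated multiplicative lattice: there exist a compact $s\not\leqslant p$ and an $n\in\mathds{N}^+$ with $c^n s\leqslant b$. Then $c^n s$ is compact, and Lemma \ref{pae}(\ref{dam}) together with $(c^n)\pp=c\pp$ gives $(c^n s)\pp=c\pp\wedge s\pp$, which lies below $b$ because $b$ is Baer. Since a prime element is meet-irreducible (as $xy\leqslant x\wedge y$ by Lemma \ref{bip}(\ref{mul})), from $c\pp\wedge s\pp\leqslant b\leqslant p$ we conclude $c\pp\leqslant p$ or $s\pp\leqslant p$; the latter is excluded by $s\leqslant s\pp$ and $s\not\leqslant p$. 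Hence $c\pp\leqslant p$, so $p$ is Baer. I expect the one delicate point to be this minimal-prime description, which I would either cite or derive by a Zorn's-lemma argument on multiplicatively closed sets of compact elements avoiding $p$. Finally, (\ref{bepb}) assembles the rest: by (\ref{ebre}) the element $b$ is radical, so (\ref{empp}) expresses $b$ as the meet of the minimal primes above it, each of which is Baer by (\ref{mtp}); thus every minimal prime above $b$ is a prime Baer element above $b$, giving $\bigwedge\{\text{prime Baer elements above }b\}\leqslant\bigwedge\{\text{minimal primes above }b\}=b$, while the reverse inequality is clear since every prime above $b$ dominates $b$.
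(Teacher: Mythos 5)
Your proposal is correct, and it is substantially more self-contained than the paper: for items (\ref{ald})--(\ref{bepb}) the paper gives no argument at all, simply citing \cite{AJP94}, whereas you reconstruct full proofs. Your converse in (\ref{ald}) --- passing to the compact element $c\vee w$ and using $(c\vee w)\p=c\p\wedge w\p$ (valid since multiplication distributes over joins, so $c\p c=0$) together with $c\p\leqslant w\p$ from Lemma \ref{pae} --- is sound, as is the reduction of (\ref{ebre}) to $(y^n)\p=y\p$ via repeated use of Lemma \ref{pae}(\ref{dam}) and (\ref{tap}); and item (\ref{bepb}) assembles from (\ref{ebre}), (\ref{mtp}) and (\ref{empp}) exactly as you say. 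Your proofs of (\ref{zao}) and (\ref{ijam}) agree in substance with the paper's, which runs both through the criterion of item (\ref{ald}) while you argue directly from the definition (using $0\pp=0$, which holds since $0\p=1$ and $1\p=0$) --- an immaterial difference. The one point needing care is the minimal-prime characterization in (\ref{mtp}), which you rightly flag as the delicate step: the fact that $c\in\cm$, $c\leqslant p$ with $p$ a minimal prime above $b$ forces $c^ns\leqslant b$ for some $n\in\mathds{N}^+$ and some compact $s\not\leqslant p$ is indeed standard under the paper's standing hypotheses, but your parenthetical sketch of its derivation is slightly misstated: the relevant multiplicatively closed set is $T=\{c^ns\mid n\geqslant 0,\ s\in\cm,\ s\not\leqslant p\}$, which does \emph{not} avoid $p$ (for $n\geqslant 1$ one has $c^ns\leqslant c\leqslant p$). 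The correct Zorn argument assumes no member of $T$ lies below $b$ and produces a prime $q\geqslant b$ with $t\not\leqslant q$ for every $t\in T$ (here one uses that $1$ is compact and finite products of compacts are compact); then every compact below $q$ is below $p$, so $q\leqslant p$ by compact generation, while $c\not\leqslant q$ gives $q<p$, contradicting minimality. With that lemma either cited (e.g., from \cite{AAJ95}) or written out in this corrected form, your proof is complete.
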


\begin{proof}
(1)--(4) See \cite{AJP94}.

(5) For any elements $u$, $v\in \cm$, let us suppose that $u\p=v\p$ and $u\leqslant 0$. This implies that $u=0$ and hence $0\p=1=v\p$. Therefore  $v=0$. The proof of $1$ is a Baer element is trivial.

(6) Let us suppose $a$, $b\in \cm$ with  $a\p = b\p$ and $b\leqslant \bigwedge_{\lambda\in \Lambda}x_{\lambda}$. Since $\bigwedge_{\lambda\in \Lambda}x_{\lambda}\leqslant x_{\lambda}$ and  $x_{\lambda}\in \z$ for all $\lambda\in \Lambda$,  we have $a\leqslant x_{\lambda}$ for all $\lambda\in \Lambda$, and hence $a\leqslant \bigwedge_{\lambda\in \Lambda}x_{\lambda},$ implying that $\bigwedge_{\lambda\in \Lambda}x_{\lambda}\in \z.$
\end{proof}

In Proposition \ref{epbe}(\ref{ald}), we have seen an alternative definition of a Baer element. In the next result we shall provide another one.

\begin{proposition}
An element $x$ in a multiplicative lattice $L$ is a Baer element if and only if, whenever  $u$, $v\in \cm$, $u\p\leqslant v\p$, and $u\leqslant x$, we have  $v\leqslant x$.
\end{proposition}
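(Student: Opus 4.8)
The plan is to derive both implications directly from the annihilator calculus recorded in Lemma~\ref{pae}, writing $(\ast)$ for the stated condition: for all $u,v\in\cm$, if $u\p\leqslant v\p$ and $u\leqslant x$, then $v\leqslant x$. Observe first that $(\ast)$ is formally stronger than the characterization in Proposition~\ref{epbe}(\ref{ald}), since the hypothesis $u\p=v\p$ of the latter is a special case of $u\p\leqslant v\p$. This observation already settles one implication, and it also tells me that the genuine content of the proposition lies in the direction from ``Baer'' to $(\ast)$.

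For the implication that $x$ being a Baer element forces $(\ast)$, I would take $u,v\in\cm$ with $u\p\leqslant v\p$ and $u\leqslant x$. Applying Lemma~\ref{pae}(\ref{mdp}) to the inequality $u\p\leqslant v\p$ reverses it at the next annihilator level to give $v\pp\leqslant u\pp$. Since $u$ is compact and lies below the Baer element $x$, the definition of a Baer element yields $u\pp\leqslant x$; combining this with $v\leqslant v\pp$ from Lemma~\ref{pae}(\ref{cpp}) produces the chain $v\leqslant v\pp\leqslant u\pp\leqslant x$, which is exactly what $(\ast)$ demands.

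For the converse I would record the slick argument noted above---$(\ast)$ restricts to the condition of Proposition~\ref{epbe}(\ref{ald}), hence $x$ is Baer---but I would also supply a proof straight from the definition, to keep the argument self-contained. So suppose $(\ast)$ holds and let $c\in\cm$ with $c\leqslant x$; the goal is $c\pp\leqslant x$. Here the obstacle is that $c\pp$ need not be compact, so $(\ast)$ cannot be applied to it as a single element. I would circumvent this by compact generation: write $c\pp=\bigvee\{w\in\cm\mid w\leqslant c\pp\}$ and verify $w\leqslant x$ for each such $w$. For a compact $w\leqslant c\pp$, Lemma~\ref{pae}(\ref{mdp}) gives $(c\pp)\p\leqslant w\p$, and the triple-annihilator identity Lemma~\ref{pae}(\ref{tap}) rewrites $(c\pp)\p$ as $c\p$, so $c\p\leqslant w\p$. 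Applying $(\ast)$ to the compact pair $(c,w)$ now yields $w\leqslant x$, and taking the join over all such $w$ gives $c\pp\leqslant x$. The only delicate point in the whole argument is this passage through compact generation, where the triple-annihilator law is precisely what converts the unwieldy $c\pp$ into data testable by the compact elements beneath it.
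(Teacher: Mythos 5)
Your proof is correct and takes essentially the same route as the paper: the forward direction is the identical chain $v\leqslant v\pp\leqslant u\pp\leqslant x$ obtained from Lemma \ref{pae}(\ref{mdp}) and (\ref{cpp}), and the converse is the same observation that the condition with $u\p\leqslant v\p$ subsumes the $u\p=v\p$ criterion of Proposition \ref{epbe}(\ref{ald}). Your supplementary self-contained converse---writing $c\pp$ as a join of compact elements beneath it and converting $(c\pp)\p$ into $c\p$ via the triple-annihilator law of Lemma \ref{pae}(\ref{tap})---is also valid, and in fact goes slightly beyond the paper, which delegates the proof of Proposition \ref{epbe}(\ref{ald}) to \cite{AJP94}.
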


\begin{proof}
Let $x$ be a Baer element in $L$. Suppose that $u$, $v\in \cm$, $u\p\leqslant v\p$, and $u\leqslant x$.  Since $x$ is a Baer element and $u$ compact, it follows that $u\pp\leqslant x$. Using this  and (\ref{mdp}), (\ref{cpp}) from Lemma \ref{pae},  we obtain
\[v\leqslant v\pp \leqslant u\pp\leqslant x.\]
Thanks to Proposition \ref{epbe}(\ref{ald}), the converse is obvious.
\end{proof}

In the context of frames, there is another well-known definition of a 
$d$-element (see \cite[Definition and Remarks 5.1.]{MZ03}), which can also be extended to a Baer element in a multiplicative lattice. Furthermore, this definition coincide to one of the (equivalent) definitions mentioned earlier. We summarize all these facts in the following proposition.

\begin{proposition}
The following are equivalent for an element $d$ in a multiplicative lattice $L$.
\begin{enumerate}
\item $d$ is a Baer element.

\item $d=\bigvee \{ c\pp\mid c\leqslant d, c\in \cm\}$.
\end{enumerate}
\end{proposition}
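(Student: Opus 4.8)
The plan is to establish the two implications separately; the bulk of the argument reduces to a routine pair of inequalities, and only one of the standing hypotheses does any real work.

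For the implication $(2)\Rightarrow(1)$, almost nothing is required. If $d=\bigvee\{c\pp\mid c\leqslant d,\ c\in\cm\}$, then for any compact $c$ with $c\leqslant d$ the element $c\pp$ is literally one of the joinands, so $c\pp\leqslant d$. But this is precisely the defining condition for $d$ to be a Baer element, so the implication is immediate and needs no appeal to compact generation or reducedness.

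For $(1)\Rightarrow(2)$, I would set $e:=\bigvee\{c\pp\mid c\leqslant d,\ c\in\cm\}$ and prove $d=e$ by the two inequalities $e\leqslant d$ and $d\leqslant e$. The inequality $e\leqslant d$ is immediate from the Baer hypothesis: each joinand $c\pp$ satisfies $c\pp\leqslant d$ because $c$ is compact with $c\leqslant d$, so their join $e$ lies below $d$. For the reverse inequality $d\leqslant e$, I would invoke compact generation of $L$ to write $d=\bigvee\{c\in\cm\mid c\leqslant d\}$; then for each compact $c\leqslant d$ we have $c\leqslant c\pp$ by Lemma \ref{pae}(\ref{cpp}), while $c\pp\leqslant e$ since $c\pp$ is a joinand of $e$, so $c\leqslant e$. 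Taking the join over all such $c$ yields $d\leqslant e$, and combining the two inequalities gives $d=e$.

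The only place where the standing assumptions genuinely enter is the step $d\leqslant e$, where compact generation is indispensable; without it the join of the various $c\pp$ could sit strictly below $d$. I do not anticipate a real obstacle here beyond recognizing that compact generation is the hypothesis to deploy and that Lemma \ref{pae}(\ref{cpp}) supplies the bound $c\leqslant c\pp$ that pushes $d$ up into $e$.
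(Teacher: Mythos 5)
Your proof is correct and follows essentially the same route as the paper: the $(2)\Rightarrow(1)$ direction is read off directly from the join, and for $(1)\Rightarrow(2)$ the paper combines your two inequalities $e\leqslant d$ and $d\leqslant e$ into the single chain $d=\bigvee\{c\in\cm\mid c\leqslant d\}\leqslant \bigvee\{c\pp\mid c\leqslant d,\ c\in\cm\}\leqslant d$, using exactly the same ingredients (compact generation, $c\leqslant c\pp$, and the Baer property). Your remark that compact generation is the only standing hypothesis doing real work matches the paper's usage as well.
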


\begin{proof}
(1)$\Rightarrow$(2): Since $L$ is compactly generated, 
$d=\bigvee\{c\in \cm\mid c\leqslant d\}.$
Now if $d$ is a Baer element, then $v\pp\leqslant d$, whenever $v\in \cm$ and $v\leqslant d$.Thus, since $w\leqslant w\pp$ for all $w\in L$,
\[ d=\bigvee\{c\in \cm\mid c\leqslant d\}\leqslant \bigvee \{ c\pp\mid c\leqslant d, c\in \cm\}\leqslant d, \]
which proves that $d=\bigvee \{ c\pp\mid c\leqslant d, c\in \cm\}$.

(2)$\Rightarrow$(1): Let us suppose that $d=\bigvee \{ c\pp\mid c\leqslant d, c\in \cm\}$. Then, whenever $v\in \cm$ and $v\leqslant d$, we have $v\pp \leqslant d$, which implies $d$ is a Baer element.
\end{proof}

\begin{remark}\label{edbe}
Let us collect all the  characterizations of Baer elements that we have  so far. In a multiplicative lattice $L$, the following are equivalent for an element $b\in L$:
\begin{enumerate}
\item[$\bullet$]
$b\in \z$.

\item[$\bullet$]
Whenever $u$, $v\in \cm$, $u\p=v\p$, and $u\leqslant b$, we have  $v\leqslant b$.

\item[$\bullet$]
Whenever $u$, $v\in \cm$, $u\p\leqslant v\p$, and $u\leqslant b$, we have  $v\leqslant b$.

\item[$\bullet$] 
$b=\bigvee \{ c\pp\mid c\leqslant b, c\in \cm\}$.
\end{enumerate}
Later on, in Proposition \ref{lclk}(\ref{altd}), we shall see one more (equivalent) characterization of a Baer element.
\end{remark}

In Proposition \ref{epbe}(\ref{ijam}), we have shown  that Baer elements are closed under arbitrary meets. The next result will give us a partial converse of that and  extend \cite[Proposition 2.14]{ST19}.

\begin{proposition}\label{mibe}
Let $\{a_i\}_{i=1}^n$ be a set of elements in a multiplicative lattice $L$ with the property that $a_i\vee a_j=1$ whenever $i\neq j$. If $\bigwedge_{i=1}^n a_i$ is a Baer element in $L$, then each $a_i$ $(1\leqslant i \leqslant n)$ is also a Baer element in $L$.
\end{proposition}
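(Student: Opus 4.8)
The plan is to verify the Baer property for a fixed $a_k$ straight from the definition, i.e.\ to show that $c\in\cm$ with $c\leqslant a_k$ forces $c\pp\leqslant a_k$. The governing idea is to push $c$ into the meet $b:=\bigwedge_{i=1}^n a_i$, where the hypothesis that $b$ is a Baer element may be used, and then to transfer the conclusion back to $a_k$ by exploiting comaximality. Throughout, the point to keep in mind is that meets need not distribute over joins in $L$, so every splitting must be arranged through multiplication, which does distribute.

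First I would put $d_k:=\prod_{j\neq k}a_j$ and prove the single-index comaximality $a_k\vee d_k=1$. This is not a formal consequence of the pairwise hypothesis; instead I would begin from $\prod_{j\neq k}(a_k\vee a_j)=1$ (each factor equals $1$), expand the left-hand side using distributivity of multiplication over joins, and observe that every summand of the expansion except $\prod_{j\neq k}a_j=d_k$ contains a factor $a_k$ and hence lies below $a_k$ by Lemma \ref{bip}(\ref{pxyx}). Thus $1=\prod_{j\neq k}(a_k\vee a_j)\leqslant a_k\vee d_k$, giving $a_k\vee d_k=1$. I would also record, again from Lemma \ref{bip}(\ref{pxyx}), that $a_k d_k=\prod_{i=1}^n a_i\leqslant\bigwedge_{i=1}^n a_i=b$.

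Next, since $L$ is compactly generated and $1$ is compact, I would extract a \emph{compact} $f\leqslant d_k$ with $a_k\vee f=1$: writing $d_k$ as the join of the compact elements below it and applying compactness of $1$ to the family $\{a_k\}\cup\{g\in\cm : g\leqslant d_k\}$ yields a finite subjoin $f$ of compacts (hence compact) with $a_k\vee f=1$. Now take any compact $c\leqslant a_k$. Then $cf$ is compact (a finite product of compacts), and $cf\leqslant a_k d_k\leqslant b$ by Lemma \ref{bip}(\ref{monj}) and the previous paragraph. The Baer property of $b$ therefore gives $(cf)\pp\leqslant b\leqslant a_k$, and since $(cf)\pp=c\pp\wedge f\pp$ by Lemma \ref{pae}(\ref{dam}), we obtain $c\pp\wedge f\pp\leqslant a_k$.

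The final and most delicate step is to discard the extraneous factor $f\pp$. Here I would multiply $c\pp$ by the comaximal decomposition $a_k\vee f=1$:
\[c\pp=c\pp(a_k\vee f)=c\pp a_k\vee c\pp f.\]
The first summand satisfies $c\pp a_k\leqslant a_k$ by Lemma \ref{bip}(\ref{pxyx}); for the second, $c\pp f\leqslant c\pp\wedge f\leqslant c\pp\wedge f\pp\leqslant a_k$, using Lemma \ref{bip}(\ref{mul}), Lemma \ref{pae}(\ref{cpp}), and the bound just established. Hence $c\pp\leqslant a_k$, so $a_k\in\z$. The main obstacle is exactly this non-distributive setting: one cannot legitimately expand $c\pp\wedge(a_k\vee f)$ as a join of meets, and the argument succeeds only because the meet $a_kd_k$ can be replaced by the product and because multiplication, unlike meet, distributes over joins.
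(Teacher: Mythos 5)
Your proof is correct, and it takes a genuinely different route from the paper's. The paper argues via the annihilator-comparison characterization of Baer elements (Proposition \ref{epbe}(\ref{ald}) in its $u\p\leqslant v\p$ form): taking compact $u$, $v$ with $u\p\leqslant v\p$ and $u\leqslant a_j$, it decomposes $v=va_j\vee\bigwedge_{k\neq j}va_k$ from the comaximality $a_j\vee\bigwedge_{k\neq j}a_k=1$, and pushes the second summand below $a_j$ using Lemma \ref{pae}(\ref{mip}) together with the Baer property of the meet. You instead work straight from the original definition ($c\in\cm$ and $c\leqslant a_k$ must force $c\pp\leqslant a_k$), replace the meet by the product $d_k=\prod_{j\neq k}a_j$, and --- the key extra step --- extract a \emph{compact} $f\leqslant d_k$ with $a_k\vee f=1$, so that the Baer property of $b=\bigwedge_{i=1}^n a_i$ is applied to the genuinely compact element $cf$; Lemma \ref{pae}(\ref{dam}) and multiplication of the decomposition $a_k\vee f=1$ by $c\pp$ then finish the argument. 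This buys real rigor: the paper's proof invokes Baerness (stated only for compact elements) on the products $ua_k$ and $va_k$, which need not be compact, and its containment $ua_k\leqslant\bigwedge_{i=1}^n a_i$ is, as written, only visible for the indices $j$ and $k$; your compactness extraction and the passage from meet to product (so that $cf\leqslant a_kd_k\leqslant b$) circumvent both points at the cost of one extra paragraph. Your closing remark --- that meets need not distribute over joins, so comaximality must be exploited through multiplication, which does --- identifies precisely why the product decomposition is the right tool here, and in this sense your argument is the more careful and self-contained of the two.
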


\begin{proof}
Suppose $u$ and $v$ are compact elements in $L$, $u\p \leqslant v\p$ and $u\leqslant a_j$, for some $j\in \{1, \ldots, n\}.$ It is easy to see that
$ a_j\vee \bigwedge_{k=1,\, k\neq j}^n a_k =1,$
from which we obtain 
\[va_j \vee \bigwedge_{k=1,\, k\neq j}^n va_k =v.\]
Observe that $u\p \leqslant v\p$ implies $(ua_k)\p\leqslant (va_k)\p$, for any $k\neq j$. Since $ua_k\leqslant \bigwedge_{k=1}^na_k$ and $\bigwedge_{k=1}^n a_k\in \z$, this implies that \[\bigwedge_{k=1,\, k\neq j}^n va_k \leqslant \bigwedge_{i=1}^n a_k,\]
and hence $\bigwedge_{k=1,\, k\neq j}^n va_k \leqslant a_j$. Obviously $va_j\leqslant a_j$. Therefore $v\leqslant a_j$, proving that $a_j\in \z$.
\end{proof}

We shall now inquire when Baer elements are closed under finite products, and the following theorem provides us with a necessary and sufficient condition for that.

\begin{theorem}\label{cup}
The product of two Baer elements in a multiplicative lattice $L$ is a Baer element if and only if $c\pp c\pp=c\pp$ for all compact elements $c$ in $L$.
\end{theorem}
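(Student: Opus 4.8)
The plan is to reduce both implications to the behaviour of a single compact element under the annihilator operation, so I would first record a fact that drives the necessity direction: every element of the form $x\pp$ is a Baer element. Indeed, if $w\in\cm$ and $w\leqslant x\pp$, then Lemma \ref{pae}(\ref{mdp}) gives $x^{\perp\perp\perp}\leqslant w\p$, which reads $x\p\leqslant w\p$ by Lemma \ref{pae}(\ref{tap}); applying \ref{mdp} once more yields $w\pp\leqslant x\pp$, as required. With this in hand both directions become short.

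For the necessity ($\Rightarrow$) I would assume products of Baer elements are Baer and fix a compact $c$. By the observation above $c\pp$ is a Baer element, so $c\pp c\pp$ is a Baer element by hypothesis. Since $1$ is compact and finite products of compact elements are compact, $c^2=cc$ is compact; and from $c\leqslant c\pp$ (Lemma \ref{pae}(\ref{cpp})) together with monotonicity of multiplication (Lemma \ref{bip}(\ref{monj})) I get $c^2\leqslant c\pp c\pp$. Feeding the compact element $c^2$ into the Baer element $c\pp c\pp$ gives $(c^2)\pp\leqslant c\pp c\pp$. The point is that $(c^2)\pp=(cc)\pp=c\pp\wedge c\pp=c\pp$ by Lemma \ref{pae}(\ref{dam}), so $c\pp\leqslant c\pp c\pp$; the reverse inequality is immediate from Lemma \ref{bip}(\ref{pxyx}), and equality follows. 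The one delicacy here is that $c\pp$ itself need not be compact, so one cannot test the Baer condition of $c\pp c\pp$ directly against $c\pp$ — the workaround is to test against the compact surrogate $c^2$, whose double annihilator recovers $c\pp$.

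For the sufficiency ($\Leftarrow$) I would assume $c\pp c\pp=c\pp$ for every compact $c$ and let $a,b$ be Baer elements. To show $ab\in\z$, take any compact $w$ with $w\leqslant ab$. Since $ab\leqslant a\wedge b$ by Lemma \ref{bip}(\ref{mul}), we have $w\leqslant a$ and $w\leqslant b$; as $a,b$ are Baer and $w$ is compact, this forces $w\pp\leqslant a$ and $w\pp\leqslant b$. Now the hypothesis enters: $w$ is compact, so $w\pp w\pp=w\pp$, and monotonicity of multiplication (Lemma \ref{bip}(\ref{monj})) gives $w\pp=w\pp w\pp\leqslant ab$. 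Hence $ab$ is a Baer element.

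I expect the conceptual crux to be the sufficiency direction, specifically the recognition that the idempotency hypothesis is exactly what upgrades the cheap containment $w\pp\leqslant a\wedge b$ — which already holds because meets of Baer elements are Baer (Proposition \ref{epbe}(\ref{ijam})) — to the containment $w\pp\leqslant ab$ inside the product. Once that bridge from the meet to the product is identified, everything else is routine bookkeeping with Lemma \ref{pae} and the monotonicity and absorption properties of multiplication in Lemma \ref{bip}.
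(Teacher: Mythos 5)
Your proof is correct and takes essentially the same route as the paper: for sufficiency, idempotency upgrades $w\pp\leqslant a$, $w\pp\leqslant b$ to $w\pp=w\pp w\pp\leqslant ab$, and for necessity you test the Baer element $(c\pp)^2$ against the compact surrogate $c^2$ and recover $c\pp=(cc)\pp$ via Lemma \ref{pae}(\ref{dam}), exactly as the paper does. The only (cosmetic, and arguably tidier) difference is that you prove directly that $x\pp$ is a Baer element, a fact the paper invokes implicitly in its converse and only establishes afterwards as Proposition \ref{rpb}(\ref{anb}), so your version avoids that forward reference.
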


\begin{proof}
Let us suppose that $a$ and $b$ are two Baer elements in $L$. Suppose $c\leqslant ab$ for some compact element $c$ in $L$. This implies that $c\leqslant a$ and $c\leqslant b$. Since $a$ and $b$ are Baer elements, we have $c\pp\leqslant a$ and $c\pp \leqslant b$. Applying the hypothesis and Lemma \ref{bip}(\ref{monj}), we obtain
\[c\pp=c\pp c\pp\leqslant ab,\]
implying $ab$ is a Baer element.
Conversely, let us suppose that $c$ be a compact element in $L$. Since $c\leqslant c\pp$, we have $c^2\leqslant (c\pp)^2$. Moreover, $(c\pp)^2$ is a Baer element by our assumption. Using all these and Lemma \ref{pae}(\ref{dam}), we have the desired identity from the following:
\[c\pp=c\pp \wedge c\pp=(cc)\pp\leqslant (c\pp)^2=c\pp c\pp\leqslant c\pp. \qedhere\] 
\end{proof}

\begin{definition}
We shall say a multiplicative lattice $L$ is a $B$-\emph{multiplicative lattice} if $(c\pp)^2=c\pp$ holds for all compact elements $c$ in $L$. 
\end{definition}

Our next goal is to obtain some examples of Baer elements using residuals.  

\begin{proposition}\label{rpb}
In a multiplicative lattice $L$, the following hold.
\begin{enumerate}
	
\item\label{rbe} If $a$ is a Baer element in $L$, then so is $(a:b)$ for any element $b$ in $L$.
 
\item\label{anb} For every element $a$ in $L$, the element $a\p$ is a Baer element.

\item If   $b$, $\{b_{i}\}_{i \in I}$, $c$ are elements in $L$  and   $a$, $\{a_{j}\}_{j \in J}$ are Baer elements in $L$, then    $((a : b) : c),$ $(a : b c),$ $((a : c) : b),$ $ (\bigwedge_{j\in J} a_{j} : b),$ $ \bigwedge_{j\in J}(a_{j}: b),$ $ (a : \bigvee_{i\in I}b_{i}),$ and $ \bigwedge_{i\in I}(a : b_{i})$ are  Baer elements in $L$.
\end{enumerate}
\end{proposition}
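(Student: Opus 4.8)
The plan is to do all the real work in part (1) and then obtain parts (2) and (3) as formal consequences. For part (1), I would check the defining property of a Baer element directly: take a compact $c$ with $c\leqslant (a:b)$ and aim to show $c\pp\leqslant (a:b)$, i.e. $c\pp b\leqslant a$. The starting point is the inequality $(a:b)\,b\leqslant a$, which follows from the distributivity of multiplication over joins; combined with monotonicity (Lemma~\ref{bip}(\ref{mon})), $c\leqslant (a:b)$ then yields $cb\leqslant a$.

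The main obstacle is that $b$ is an arbitrary element, so $cb$ need not be compact and the Baer hypothesis on $a$ cannot be applied to it directly. To circumvent this I would pass to compact pieces of $b$: since $L$ is compactly generated, $b=\bigvee\{d\in\cm\mid d\leqslant b\}$, and distributivity turns $cb\leqslant a$ into the family of inequalities $cd\leqslant a$ for $d\in\cm$, $d\leqslant b$. Each product $cd$ is compact by our standing assumption that finite products of compact elements are compact, so Baer-ness of $a$ gives $(cd)\pp\leqslant a$. Rewriting $(cd)\pp=c\pp\wedge d\pp$ via Lemma~\ref{pae}(\ref{dam}), together with the elementary bound $c\pp d\leqslant c\pp\wedge d\pp$ (immediate from Lemma~\ref{bip}(\ref{pxyx}), commutativity, and Lemma~\ref{pae}(\ref{cpp})), yields $c\pp d\leqslant a$ for every such $d$. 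Taking the join over $d$ and using distributivity once more gives $c\pp b=\bigvee_d c\pp d\leqslant a$, hence $c\pp\leqslant (a:b)$, completing part (1).

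Part (2) should then be a one-line deduction: since $xa=0$ is equivalent to $xa\leqslant 0$, we have $a\p=(0:a)$, and because $0\in\z$ by Proposition~\ref{epbe}(\ref{zao}), part (1) immediately gives $a\p\in\z$.

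Finally, part (3) is a formal consequence of part (1) and Proposition~\ref{epbe}(\ref{ijam}) (closure of $\z$ under arbitrary meets). A single application of part (1) shows that $(a:bc)$ and $(a:\bigvee_{i}b_i)$ are Baer, treating $bc$ and $\bigvee_i b_i$ simply as elements of $L$, and applying part (1) twice handles the iterated residuals $((a:b):c)$ and $((a:c):b)$. Since $\bigwedge_{j}a_j$ is Baer by closure under meets, one further application of part (1) yields $(\bigwedge_{j}a_j:b)\in\z$; and as each $(a_j:b)$ and each $(a:b_i)$ is Baer, the meets $\bigwedge_{j}(a_j:b)$ and $\bigwedge_{i}(a:b_i)$ are Baer again by Proposition~\ref{epbe}(\ref{ijam}). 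I expect no difficulty here beyond bookkeeping, the only substantive step in the whole proposition being the compact-approximation argument in part (1).
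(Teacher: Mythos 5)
Your proof is correct, but it takes a genuinely different route from the paper's, and in one respect a more careful one. For part (1) the paper argues via the two-compact-element characterization (Proposition~\ref{epbe}(\ref{ald}) in its $u\p\leqslant v\p$ form): from $u\leqslant (a:b)$ it gets $bu\leqslant a$, then uses Lemma~\ref{pae}(\ref{mip}) to obtain $(bu)\p\leqslant (bv)\p$ and invokes Baer-ness of $a$ to conclude $bv\leqslant a$. You instead work with the original $c\pp$-definition, decompose $b$ into compact elements $d\leqslant b$, apply Baer-ness only to the compact products $cd$, and rewrite $(cd)\pp=c\pp\wedge d\pp$ via Lemma~\ref{pae}(\ref{dam}) before rejoining. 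Your compact-approximation step is not mere pedantry: in the paper's argument the elements $bu$ and $bv$ need not be compact (only $u,v$ are), so the invocation of the Baer property there is, strictly speaking, a leap --- it is repaired by exactly your idea, e.g.\ running the paper's argument with $du,dv$ for compact $d\leqslant b$ (using Lemma~\ref{pae}(\ref{mip}) to get $(du)\p\leqslant (dv)\p$) and taking the join over $d$. So your version is airtight where the paper's glosses, at the cost of the extra decomposition; the paper's version, once patched, is slightly shorter and stays entirely within the annihilator-comparison characterization. For part (2), your one-liner $a\p=(0:a)$ with $0\in\z$ is precisely the first route the paper mentions (it also records an independent direct proof via $a^{\perp\perp\perp}=a\p$, Lemma~\ref{pae}(\ref{mdp}),(\ref{tap})), and your part (3) fills in the bookkeeping behind the paper's bare ``follows from (\ref{rbe})'', correctly combining part (1) with closure under meets (Proposition~\ref{epbe}(\ref{ijam})).
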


\begin{proof}
(1) Let us suppose $u$ and $v$ are compact elements in $L$, $u\p\leqslant v\p$, and $u\leqslant (a:b).$ This implies that
\[bu\leqslant (a:b)b\leqslant a.\]
Since $(bu)\p\leqslant (bv)\p$ by Lemma \ref{pae}(\ref{mip}) and since $a\in \z$, we have $bv\leqslant a$, which implies that $v\leqslant (a:b)$.

(2) Although the proof follows from (\ref{rbe}) and Proposition \ref{epbe}(\ref{zao}), however, we give another proof. Suppose $v\in \cm$ and $v\leqslant a\p$. Applying (\ref{mdp}) and (\ref{tap}) from Lemma \ref{pae}, we have $v\pp\leqslant a\p$. Hence $a\p\in \z$.

(3) Follows from (\ref{rbe}).
\end{proof}

We next wish to show that certain types of multiplicative lattice homomorphisms ``contract'' Baer elements to Baer elements. Since we are going to impose certain conditions on the homomorphisms, we fist show by a simple example that the conditions we will impose are not so stringent as to make them isomorphisms. 

\begin{example}
Let $\mathbf 2$ be the two-element chain $\{0,1\}$ and $\mathbf 3 =\{0,m,1\}$ be the three-element chain; both viewed as multiplicative lattices with product given by $\wedge$. Let $\phi\colon \mathbf 2\to\mathbf 3$ be the unique homomorphism. Then $\phi$ has the following properties:
\begin{itemize}
\item[$\bullet$]
$\phi$ maps compact elements to compact element.

\item[$\bullet$]
$\phi$ is injective, but not an isomorphism.

\item[$\bullet$]
For any compact $c$ in the domain of $\phi$, $\phi(c^\perp)=\phi(c)^\perp$.
\end{itemize} 
\end{example}

\begin{definition}
We call a  multiplicative lattice homomorphism $\phi\colon L\to M$ {\em strong} if it is injective, it maps compact elements to compact elements, and it commutes with annihilation of compact elements in the sense that for any compact $c\in L$, we have  $\phi(c^\perp)=\phi(c)^\perp$. 	
\end{definition}

\begin{theorem}\label{bep1}
If $\phi\colon L \to M$ is a strong homomorphism, then $\phi^{-1}[\mathfrak B(M)]\subseteq \mathfrak B(L)$. 
\end{theorem}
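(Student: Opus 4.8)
The plan is to verify the Baer condition for a preimage directly. I would start from an element $a\in L$ with $\phi(a)\in\mathfrak B(M)$ and aim to show $a\in\z$; by definition this means taking an arbitrary $c\in\cm$ with $c\leqslant a$ and proving $c\pp\leqslant a$. Applying $\phi$, and using that a strong homomorphism is monotone and carries compact elements to compact elements, I obtain that $\phi(c)$ is compact in $M$ and $\phi(c)\leqslant\phi(a)$; since $\phi(a)$ is a Baer element, this gives $\phi(c)\pp\leqslant\phi(a)$. The whole proof then reduces to transporting this inequality back along $\phi$ to the conclusion $c\pp\leqslant a$.

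Two tools will make the transport possible. First, the strong hypothesis applies to the compact element $c$ and yields $\phi(c)\p=\phi(c\p)$, hence $\phi(c)\pp=\phi(c\p)\p$. Second, an injective homomorphism reflects order: if $\phi(x)\leqslant\phi(y)$ then $\phi(x)=\phi(x)\wedge\phi(y)=\phi(x\wedge y)$, so $x=x\wedge y$ by injectivity, i.e. $x\leqslant y$. In view of this order-reflection it is enough to prove $\phi(c\pp)\leqslant\phi(a)$, and since $\phi(c)\pp\leqslant\phi(a)$ already holds, it suffices to show $\phi(c\pp)\leqslant\phi(c)\pp=\phi(c\p)\p$, after which the chain closes the argument.

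To establish $\phi(c\pp)\leqslant\phi(c\p)\p$ I would use the standard identity $w\p w=0$ (valid because multiplication distributes over the join defining $w\p$), applied to $w=c\p$, giving $c\pp c\p=0$. Since $\phi$ preserves multiplication and $\phi(0)=0$, this yields $\phi(c\pp)\,\phi(c\p)=\phi(c\pp c\p)=\phi(0)=0$, so $\phi(c\pp)$ is one of the elements annihilating $\phi(c\p)$ and therefore $\phi(c\pp)\leqslant\phi(c\p)\p$, as wanted.

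I expect the main obstacle to be precisely this last step. The naive move is to apply the strong identity twice and write $\phi(c\pp)=\phi(c)\pp$, but this is illegitimate because $c\p$ is in general not compact, so $\phi((c\p)\p)=\phi(c\p)\p$ is not available. The resolution is to settle for the \emph{inequality} $\phi(d\p)\leqslant\phi(d)\p$, which holds for \emph{every} $d\in L$ (not merely compact ones) by the multiplicativity argument above; since I only need an upper bound for $\phi(c\pp)$, this is exactly enough. A small fact to record en route is $\phi(0)=0$: because $1$ is compact and $1\p=0$, the strong identity gives $\phi(0)=\phi(1\p)=\phi(1)\p$, whence $\phi(0)=\phi(0\cdot1)=\phi(0)\phi(1)=\phi(1)\p\,\phi(1)=0$ by the identity $w\p w=0$ in $M$.
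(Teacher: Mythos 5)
Your proof is correct, but it follows a genuinely different route from the paper's. The paper verifies the Baer condition through the equivalent characterization of Proposition \ref{epbe}(\ref{ald}): it takes compact $c,d\in L$ with $c\p=d\p$ and $c\leqslant x$, notes that strongness gives $\phi(c)\p=\phi(c\p)=\phi(d\p)=\phi(d)\p$ with $\phi(c),\phi(d)$ compact, concludes $\phi(d)\leqslant\phi(x)$ from $\phi(x)\in\mathfrak B(M)$, and finishes by the same order-reflection argument via injectivity that you use. That route keeps every application of the strong identity on compact elements, exactly where it is hypothesized, so no auxiliary facts are needed. You instead work with the original definition and must control the generally non-compact element $c\pp$; you do this correctly by proving the one-sided inequality $\phi(d\p)\leqslant\phi(d)\p$ for \emph{arbitrary} $d$ from $d\p d=0$, multiplicativity of $\phi$, and the lemma $\phi(0)=0$ (itself neatly derived from strongness at the compact element $1$). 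Your flagged obstacle is real: $\phi(c\pp)=\phi(c)\pp$ is not available since $c\p$ need not be compact, and your substitution of the inequality $\phi(c\pp)\leqslant\phi(c\p)\p=\phi(c)\pp$ is exactly the right fix. What your version buys, beyond a correct proof, is the observation that half of the strong condition is automatic for multiplication-preserving maps with $\phi(0)=0$, so the genuine content of strongness is only the inequality $\phi(c)\p\leqslant\phi(c\p)$ for compact $c$; what the paper's version buys is brevity, since the pair characterization dispenses with annihilators of non-compact elements entirely.
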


\begin{proof}
Let $x\in \phi^{-1}[\mathfrak B(M)]$, and consider any compact elements $c$ and $d$ in $L$ with $c^\perp=d^\perp$ and $c\leqslant x$. We aim to show that $d\leqslant x$, which, by Proposition \ref{epbe}(\ref{ald}), will prove that $x$ is a Baer element. Since $\phi$ is strong, $\phi(c)$ and $\phi(d)$ are compact element of $M$. Moreoever, $\phi(c) \leqslant \phi(x)$. Since $x\in \phi^{-1}[\mathfrak B(M)]$, we have $\phi(x)\in\mathfrak B(M)$. It follows, therefore, that $\phi(d)\leqslant \phi(x)$, whence $d\leqslant x$ since $\phi$ is injective. Thus, $x\in\mathfrak B(L)$, as desired.  
\end{proof}

In terms of similarity, one of the closest classes of elements to Baer elements is that of $z$-elements. To define a $z$-element, we need  the confirmation of existence of maximal elements in  multiplicative lattices, which follows from Lemma \ref{flm}.
For an $a$ in $L$, let us define \[\mathcal{M}_a:=\left\{ m\in \mx \mid a\leqslant m\right\}\quad \text{and} \quad \mathsf{m}_a:= {\bigwedge}\mathcal{M}_a.\]
According to \cite{MC19}, an element $x$ in  $L$ is called a \emph{z-element} if, whenever $a$, $b\in L$,  $\mathcal{M}_a=\mathcal{M}_b$, and $b\leqslant x $, we have $a\leqslant x$. 

\begin{remark}
It can be shown (see \cite[Lemma 2.10]{MC19}) that the following three statements are equivalent in a multiplicative lattice $L$ (\textit{cf}. Remark \ref{edbe}):

\begin{enumerate}
\item[$\bullet$]An element $x$ is a $z$-element in $L$.
	
\item[$\bullet$]Whenever $a$, $b\in L$, $\mathcal{M}_a\supseteq \mathcal{M}_b$, and $b\leqslant x $, we have $a\leqslant x$. 
	
\item[$\bullet$]Whenever $a$, $b\in L$,  $\mathsf{m}_b\leqslant \mathsf{m}_a$, and $a\leqslant x$, we have $b\leqslant x$.  
\end{enumerate}
\end{remark}

In the next theorem, we shall characterize those multiplicative lattices in which every Baer element is a $z$-element. This result is well-known in rings (\textit{e.g.}, see \cite[Proposition 2.9]{ST19}).  

\begin{theorem}
\label{beiz}
In a multiplicative lattice $L$, every Baer element is a $z$-element if and only if $L$ is semisimple.
\end{theorem}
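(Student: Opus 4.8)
The plan is to handle the two implications separately: the backward one is a short consequence of $0$ being a Baer element, while the forward one rests on a single computation with maximal (hence prime) elements combined with semisimplicity.

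For the backward direction, suppose every Baer element of $L$ is a $z$-element. By Proposition \ref{epbe}(\ref{zao}) we have $0\in\z$, so $0$ is a $z$-element. Now $\mathcal{M}_{\jr}=\mx=\mathcal{M}_0$, since $\jr=\bigwedge\mx$ lies below every maximal element; as also $0\leqslant 0$, applying the defining property of the $z$-element $0$ to the pair $(\jr,0)$ gives $\jr\leqslant 0$, i.e. $\jr=0$, so $L$ is semisimple.

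For the forward direction, assume $L$ is semisimple and let $x\in\z$. I will first record two facts. Every maximal element is prime: if $m\in\mx$ and $y,z\not\leqslant m$, then $m\vee y=1=m\vee z$ by maximality, whence $1=(m\vee y)(m\vee z)\leqslant m\vee yz$ by distributivity of the product over joins together with $m^2\leqslant m$ and Lemma \ref{bip}(\ref{pxyx}), so $yz\not\leqslant m$ (cf.\ Proposition \ref{pmb}). Also, for any $c\in L$ we have $c\,c\p=0$, so for a prime $m$ either $c\leqslant m$ or $c\p\leqslant m$. The heart of the proof is the claim that for every compact $c\leqslant x$ one has $\mathsf{m}_c\leqslant x$. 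To see this, take a compact $e\leqslant\mathsf{m}_c$, so that $e\leqslant m$ for every maximal $m\geqslant c$. For an arbitrary $m\in\mx$: if $c\leqslant m$ then $e\leqslant m$, so $e\,c\p\leqslant m$; if $c\not\leqslant m$ then $c\p\leqslant m$, so again $e\,c\p\leqslant m$. Hence $e\,c\p\leqslant\bigwedge\mx=\jr=0$, giving $e\leqslant c\pp$. Since $c$ is compact with $c\leqslant x$ and $x\in\z$, we get $c\pp\leqslant x$, so $e\leqslant x$. As $\mathsf{m}_c$ is the join of the compact elements below it, this yields $\mathsf{m}_c\leqslant x$.

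It remains to deduce that $x$ is a $z$-element, i.e. that $\mathcal{M}_a=\mathcal{M}_b$ and $b\leqslant x$ force $a\leqslant x$. Writing $a$ as a join of compact elements and using $a\leqslant\mathsf{m}_a=\mathsf{m}_b$, the task is to upgrade the clean statement ``$\mathsf{m}_c\leqslant x$ for every compact $c\leqslant x$'' to ``$\mathsf{m}_b\leqslant x$ for every $b\leqslant x$''. I expect this upgrade to be the main obstacle. Indeed, for a compact $e\leqslant\mathsf{m}_b$ the same computation only gives $e\leqslant b\pp$, whereas to invoke the Baer property of $x$ one needs a single compact $u\leqslant b$ with $e\leqslant u\pp$; producing it amounts to extracting a finite subcover from the covering $\{m\in\mx:e\not\leqslant m\}\subseteq\bigcup\{\,\{m\in\mx:u\not\leqslant m\}\mid u\leqslant b,\ u\ \text{compact}\,\}$, i.e. to the quasi-compactness of the basic hull--kernel open set determined by $e$. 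This is precisely where the standing hypotheses that $1$ is compact and that finite products of compact elements are compact must be used (they give the analogous quasi-compactness in $\spec$), the delicate point being the comparison of the meet of maximal elements $\mathsf{m}$ with the radical $\sqrt{\,\cdot\,}$. On the compact level, however, the argument above is exactly the ring-theoretic one underlying \cite[Proposition 2.9]{ST19}.
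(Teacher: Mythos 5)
Your backward direction is correct and is essentially the paper's own argument. The forward direction, however, is incomplete, and the step you defer is not the routine compactness extraction you hope for. What you actually prove is the compact-level claim that $\mathsf{m}_c\leqslant x$ for every compact $c\leqslant x$; but the $z$-element definition used in the paper quantifies over \emph{arbitrary} $a,b\in L$, and for non-compact $b\leqslant x$ your computation yields only $e\leqslant b\pp$ for a compact $e\leqslant \mathsf{m}_b$. To invoke the Baer property of $x$ (stated for compacts only) you would need a single compact $u\leqslant b$ with $e\leqslant u\pp$ (equivalently $u\p\leqslant e\p$), and such $u$ need not exist: $b\pp$ can be strictly larger than $\bigvee\{u\pp\mid u\in \cm,\ u\leqslant b\}$, and the finite subcover you ask for fails because quasi-compactness of $\{p\in \spec\mid e\nleqslant p\}$ does not pass to its trace on $\mx$ --- the sets determined by compact $u\leqslant b$ cover only the maximal points. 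Concretely, in the ideal lattice of $C([0,1])$ (reduced, compactly generated, $1$ compact, semisimple), take pairwise disjoint open intervals $U_n$ accumulating at $0$, let $b$ be the ideal of functions whose cozero-set meets only finitely many $U_n$, let $x$ be the Baer ideal of functions vanishing on a neighbourhood of $0$, and let $e$ be the principal (compact) ideal generated by $h(t)=\mathrm{dist}\bigl(t,[0,1]\setminus\bigcup_n U_n\bigr)$. Then $b\leqslant x$ and $e\leqslant b\pp$ with $\mathsf{m}_{b\pp}=\mathsf{m}_b$, yet $e\nleqslant x$ and no compact $u\leqslant b$ satisfies $e\leqslant u\pp$. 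So the upgrade is genuinely obstructed, not merely left technical, and your text as written does not prove the forward direction.

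For comparison, the paper's proof takes a shorter route: assuming $\mathsf{m}_b\leqslant \mathsf{m}_a$ and $a\leqslant x$, it shows $a\p\leqslant b\p$ directly (for compact $c$ with $ca=0$, semisimplicity and \cite[Lemma 2.6]{MC19} give $\mathsf{m}_{bc}\leqslant \mathsf{m}_{ac}=\mathsf{m}_0=\jr=0$, hence $bc=0$), and then concludes $b\leqslant x$ by applying the Baer property of $x$ in its annihilator form to the \emph{arbitrary} elements $a,b$ --- that is, the paper uses at the non-compact level precisely the implication whose compact-only formulation blocked you (all the equivalent characterizations in Remark \ref{edbe} are stated for compact $u,v$). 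So you have correctly isolated the delicate point that the paper passes over silently; note that the example above shows this non-compact form does not follow formally from the compact one. If, on the other hand, the $z$-condition is read with $a,b$ compact (as in the ring-theoretic source \cite[Proposition 2.9]{ST19}, where elements play the role of compacts), then your argument is already complete: for compact $a,b$ with $\mathcal{M}_a=\mathcal{M}_b$ and $b\leqslant x$, your claim with $c=b$ gives $a\leqslant \mathsf{m}_a=\mathsf{m}_b\leqslant x$, and this version is self-contained where the paper cites \cite{MC19}. As a proof of the theorem in the generality the paper states it, however, the proposal has a genuine gap.
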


\begin{proof}
Let $x$ be a Baer element in a semisimple multiplicative lattice $L$. Let us suppose that $\mathsf{m}_b\leqslant \mathsf{m}_a$ and $a\leqslant x$. We claim that $a\p \leqslant b\p$. For $c\in \cm$ with $ca=0$ implies that $c\leqslant a\p$. Now from \cite[Lemma 2.6]{MC19} and the hypothesis, it follows that 
\[\mathsf{m}_{bc}\leqslant \mathsf{m}_{ac}=\mathsf{m}_{0}=\jr=0.\]
Therefore $bc=0$, and hence $c\leqslant b\p$. Since $x\in \z$, we have $b\leqslant x$, and  hence $x$ is a $z$-element in $L$.
Conversely, let us suppose that every Baer element in $L$ is a $z$-element. From Proposition \ref{epbe}(\ref{zao}) it follows that $0$ is a $z$-element. Now if $a\in \jr$, then we have 
$\mathsf{m}_a=\mathsf{m}_0=0,$
which implies that $a=0$.
\end{proof}

\begin{corollary}
In an algebraic frame, every $d$-element is a $z$-element.
\end{corollary}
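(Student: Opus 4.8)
The plan is to deduce this corollary from Theorem \ref{beiz} by showing that an algebraic frame is automatically a semisimple multiplicative lattice, and that in a frame the notions of Baer element and $d$-element coincide (and similarly for $z$-elements). First I would recall that a frame is a complete Heyting algebra, i.e.\ a multiplicative lattice in which multiplication is $\wedge$ (so $xy = x\wedge y$), and that ``algebraic'' means compactly generated; in this setting $1$ is compact and finite products of compact elements are compact (a finite meet of compact elements is compact in an algebraic frame). Since $xy=x\wedge y$, every element is idempotent, hence $0$ is the only nilpotent element, so the frame is reduced. Thus an algebraic frame fits the standing hypotheses of the paper, and a ``$d$-element'' in the frame sense is exactly a ``Baer element'' in the sense used here, as indicated in the introduction where the two terminologies are identified.

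The key step is then to verify that every algebraic frame is semisimple, i.e.\ $\jr = \bigwedge\mx = 0$. I would argue as follows: because multiplication is $\wedge$, the maximal elements of a frame are precisely its meet-irreducible (coatom-type) elements, and in a frame one has the general fact that $0$ is the meet of all maximal elements whenever the frame is, say, spatial or compactly generated. More carefully, using compact generation together with Lemma \ref{flm}, for any nonzero compact $c$ one can separate $c$ from any proper element below which it does not lie; the point is to show that if $\jr\neq 0$ then some nonzero element lies below every maximal element, contradicting the ability (guaranteed by compactness of $1$ and Lemma \ref{flm}) to extend a proper element to a maximal one avoiding a given nonzero compact element. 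Concretely, for a nonzero compact $c$, the residual or the annihilator structure lets one produce a maximal element $m$ with $c\not\leqslant m$, whence $c\not\leqslant\jr$; since compacts generate, $\jr = 0$.

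With semisimplicity established, Theorem \ref{beiz} applies verbatim: in the semisimple multiplicative lattice underlying the algebraic frame, every Baer element is a $z$-element, which is precisely the assertion that every $d$-element is a $z$-element.

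I expect the main obstacle to be the semisimplicity step, specifically showing $\bigwedge\mx = 0$ in an arbitrary algebraic frame. The subtlety is that maximal elements in a frame need not be prime in general, so one cannot simply invoke a prime-avoidance argument; instead one must exploit compact generation and the compactness of $1$ (via Lemma \ref{flm}) to guarantee, for each nonzero compact $c$, a maximal element missing $c$. Care is needed to confirm that such a separating maximal element genuinely exists for \emph{every} nonzero compact element, rather than merely for those below some proper element, and to ensure the argument does not covertly require primeness or a stronger separation axiom on the frame than algebraicity provides.
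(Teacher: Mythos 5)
Your reduction to Theorem \ref{beiz} is exactly the derivation the paper intends (the corollary is stated there without proof, immediately after the theorem), and the first half of your argument is sound: in a frame multiplication is $\wedge$, so every element is idempotent and the frame is reduced; algebraicity gives compact generation; finite meets of compacts are compact; and $d$-elements are precisely Baer elements. The problem is the step you yourself flagged: the claim that every algebraic frame is semisimple is \emph{false}, so your proposed separation argument cannot be completed. A concrete counterexample is the three-element chain $0<a<1$, i.e.\ the open-set lattice of the Sierpi\'{n}ski space, viewed as a frame with product $\wedge$. It is finite, hence algebraic with $1$ compact, and it is reduced; but its unique maximal element is $a$, so $\jr=a\neq 0$. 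In particular, for the nonzero compact $c=a$ there is \emph{no} maximal element $m$ with $c\nleqslant m$, which is precisely the separating element your sketch requires; no appeal to residuals or annihilators can produce it, since it does not exist.

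Moreover, the converse direction of Theorem \ref{beiz} shows this gap cannot be patched by a cleverer argument: whenever $\jr\neq 0$, the element $0$ is a Baer ($d$-) element by Proposition \ref{epbe}(5) but fails to be a $z$-element. In the chain above one checks directly that $\mathcal{M}_0=\mathcal{M}_a=\{a\}$ while $a\nleqslant 0$, so $0\notin$ the set of $z$-elements. Thus semisimplicity is not merely a convenient sufficient condition in your plan; it is necessary for the conclusion, and it does not follow from algebraicity. The corollary as you (and, implicitly, the paper) would derive it therefore needs semisimplicity of the frame as an additional hypothesis --- it holds, for instance, for frames of open sets of $T_1$ spaces, where the maximal elements are the complements of points and their meet is $0$ --- but not for arbitrary algebraic frames. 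Your instinct that the existence of a maximal element avoiding each nonzero compact is the crux, and might fail, was exactly right; the correct resolution is to add that hypothesis, not to try to prove it.
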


\begin{remark}
For the ring $\mathcal{R}L$ of continuous functions over a frame $L$,  the notions of  $d$-ideals and $z$-ideals are introduced in \cite[Definition 4.10]{Dub09},  and it is not hard to see that every $d$-ideal in $\mathcal{R}L$ is a $z$-ideal. Therefore, our Theorem \ref{beiz} also extends this result to multiplicative lattices.
\end{remark}

We shall now obtain a characterization of domains by the absence of non-trivial Baer elements in them, and the next result extends \cite[Lemma 2.11]{ST19}.

\begin{theorem}
\label{cdbe}
A multiplicative lattice $L$ is a domain if and only if $L$ has no non-zero Baer element.
\end{theorem}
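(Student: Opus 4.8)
The plan is to prove both implications through elementary annihilator computations together with the fact (Proposition \ref{rpb}(\ref{anb})) that every element of the form $a\p$ is a Baer element. Before starting, I would record three identities valid in any multiplicative lattice: $0\p=1$ (because $x\cdot 0=0$ for every $x$ by Lemma \ref{bip}, so every element annihilates $0$), $1\p=0$ (because $x\cdot 1=x$, so $x\cdot 1=0$ forces $x=0$), and $a\p a=0$ for every $a$ (since $a\p a=\bigl(\bigvee\{x\mid xa=0\}\bigr)a=\bigvee\{xa\mid xa=0\}=0$ by distributivity of the product over joins). I would also note at the outset that, since $1$ is always a Baer element by Proposition \ref{epbe}(\ref{zao}), the phrase ``non-zero Baer element'' must be read as ``Baer element different from both $0$ and $1$'', so the claim is that a domain is precisely a lattice whose only Baer elements are $0$ and $1$.

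For the forward implication, I would assume $L$ is a domain and take any Baer element $b$ with $b\neq 0$, aiming to show $b=1$. Because $L$ is compactly generated, $b$ is a join of compact elements, and as $b\neq 0$ at least one of them is non-zero; thus there is a non-zero compact $c\leqslant b$. The key step is to show $c\p=0$: if $c\p\neq 0$, choose a non-zero $z\leqslant c\p$; then $zc\leqslant c\p c=0$ by Lemma \ref{bip}(\ref{mon}) and the identity above, so $zc=0$ with $z\neq 0$ and $c\neq 0$, contradicting that $L$ is a domain. Hence $c\p=0$, so $c\pp=0\p=1$. Since $b$ is a Baer element and $c\leqslant b$ is compact, we get $1=c\pp\leqslant b$, whence $b=1$, as required.

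For the converse I would argue contrapositively: suppose $L$ is not a domain, so there exist non-zero $x,y\in L$ with $xy=0$. By Proposition \ref{rpb}(\ref{anb}) the element $x\p$ is a Baer element, and I claim it is non-trivial. On one hand $y\leqslant x\p$ with $y\neq 0$ gives $x\p\neq 0$; on the other hand $x\p x=0$ together with $x\neq 0$ and $x\cdot 1=x$ forces $x\p\neq 1$. Thus $x\p$ is a Baer element strictly between $0$ and $1$, contradicting the hypothesis that $L$ has no such element; therefore $L$ must be a domain.

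The routine parts are the three preliminary annihilator identities and the contrapositive, which is immediate once $a\p$ is known to be Baer. The one genuinely load-bearing step is in the forward direction: converting the abstract domain hypothesis into the concrete statement $c\p=0$ for a non-zero compact $c$, which relies essentially on compact generation (to produce such a $c$ below $b$) and on the identity $c\p c=0$ (to manufacture the zero-divisor relation that the domain hypothesis forbids). I expect this to be the only place where care is needed.
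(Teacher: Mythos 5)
Your proof is correct, and in the forward direction it takes a genuinely different --- and more careful --- route than the paper's. Your preliminary remark is also the right one: since $1\in\z$ by Proposition \ref{epbe}(\ref{zao}), the statement read literally (``no non-zero Baer element'') is false in every non-trivial lattice, so the theorem must be understood as $\z=\{0,1\}$; the paper never makes this explicit. For the forward implication the paper argues in one line: if $a\in\z$ and $L$ is a domain, then ``$a\pp=0$'', whence $a\leqslant a\pp=0$. But in a domain $a\pp=0$ holds precisely when $a=0$ (for $a\neq 0$ the domain hypothesis gives $a\p=0$, hence $a\pp=0\p=1$), so that step either assumes the conclusion or silently excludes the case $a=1$; applied verbatim to the Baer element $a=1$ it would yield $1=0$. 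Your argument --- extract a non-zero compact $c\leqslant b$ by compact generation, deduce $c\p=0$ from $c\p c=0$ and the domain hypothesis, and conclude $1=c\pp\leqslant b$ by Baerness --- is the correct repair, and it also explains why the non-zero Baer elements of a domain collapse to $1$ rather than to $0$. Your converse is essentially the paper's: both hinge on $x\p$ being a Baer element, which is Proposition \ref{rpb}(\ref{anb}) (the paper's proof miscites this as Proposition \ref{epbe}(\ref{anb})), though you additionally dispose of the degenerate possibility $x\p=1$ via $x\p x=0$, a point the paper glosses over. In short: same skeleton for the converse, while your forward direction replaces a flawed annihilator computation with a sound compactness argument; what it costs in length it repays in actually being valid at the endpoints $0$ and $1$.
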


\begin{proof}
Let us suppose that $a$ is a Baer element in a domain $L$. This implies $a\pp=0$, and since  $a\leqslant a\pp$ by Lemma \ref{pae}(\ref{cpp}), it follows that $a=0$. Conversely, suppose $a$ is an element in $L$ such that $ax=0$ for some $x\in L$. This implies that $x\leqslant a\p .$ By  Proposition \ref{epbe}(\ref{anb}), $a\p$ is a Baer element and by hypothesis $a\p=0$. Therefore $x=0$, implying that $L$ is a domain.
\end{proof}

Our next proposition is on establishing some relations between Baer, prime, and maximal elements in multiplicative lattices, and first three of them extend \cite[Proposition 3.1]{ST19}.

\begin{proposition}
\label{pmb}
In a multiplicative lattice $L$, the following hold.
\begin{enumerate}
\item If $x\in L$, $p\in \spec$, and  $x\wedge p\in \z$, then either $x\in \z$ or $p\in \z$.

\item If $p$, $q\in \spec$  which do not belong to a chain and $p\wedge q\in \z$, then both $p$, $q\in \z$.

\item If $x\in L$, $m\in \mx$  with $x\nleqslant m$, and $x\wedge m\in \z$, then $x$, $m\in \z$.

\item If $p$ is a prime element in $L$, then either $p$ is a Baer element or the maximal
Baer elements that are below $p$ are prime Baer elements.
\end{enumerate}
\end{proposition}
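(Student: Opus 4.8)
The plan is to dispose of the two alternatives separately. If $p$ is itself a Baer element there is nothing to prove, so assume $p\notin\z$ and let $b$ be any maximal Baer element below $p$; since $p\notin\z$ we have $b<p$, and $b$ is proper because $b\leqslant p<1$. I shall show that $b$ is prime by exhibiting it as a minimal prime over itself, so that, being already a Baer element, it becomes a prime Baer element.

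First, since $b$ is a Baer element it is radical by Proposition \ref{epbe}(\ref{ebre}), and hence, by \eqref{empp}, it is the meet of the minimal primes above it; in particular such minimal primes exist. The crucial step is to locate a minimal prime over $b$ that is still contained in $p$. For this I would apply Zorn's lemma to the set $S=\{r\in\spec\mid b\leqslant r\leqslant p\}$ ordered by reverse inclusion: $S$ is nonempty because $p\in S$, and the meet of a chain of prime elements is again prime and remains in the interval $[b,p]$, so every chain has an upper bound in $(S,\geqslant)$. A maximal element $q$ of $(S,\geqslant)$ is then a minimal prime over $b$, since any prime $r$ with $b\leqslant r<q$ would lie in $S$ strictly below $q$, contradicting maximality.

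Now I would invoke Proposition \ref{epbe}(\ref{mtp}): as $q$ is a minimal prime above the Baer element $b$, it is itself a Baer element. Thus $q$ is a Baer element with $b\leqslant q\leqslant p$, and the maximality of $b$ among the Baer elements below $p$ forces $q=b$. Therefore $b=q$ is a (minimal) prime element, which, together with the fact that $b\in\z$, completes the argument.

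The main obstacle is precisely the middle step, namely the production of a minimal prime over $b$ lying below $p$; once this is in hand, Proposition \ref{epbe}(\ref{mtp}) and the maximality of $b$ collapse everything at once. It is worth recording two points about what is and is not used: the argument establishes only that any maximal Baer element below $p$ is prime, and does not address the existence of such elements (existence would require closure of Baer elements under the relevant joins, discussed in Section \ref{bcl}); and primeness of $p$ is never actually invoked, the element $p$ serving merely as a fixed ceiling above $b$.
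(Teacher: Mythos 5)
Your proposal proves only item (4) of the four-part proposition; items (1)--(3) are not addressed at all, and they require a different argument from the Zorn-type reasoning you develop. For (1) the paper's route is: if $x\leqslant p$ then $x\wedge p=x\in\z$ and we are done; otherwise pick $y\leqslant x$ with $y\nleqslant p$, and given compact $u,v$ with $u\p\leqslant v\p$ and $u\leqslant p$, use Lemma \ref{pae}(\ref{mip}) to get $(uy)\p\leqslant(vy)\p$; since $uy\leqslant x\wedge p\in\z$, it follows that $vy\leqslant x\wedge p\leqslant p$, and primeness of $p$ together with $y\nleqslant p$ gives $v\leqslant p$, so $p\in\z$. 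Item (2) is the same manipulation run symmetrically, using incomparability of $p$ and $q$ to supply the element $x$ with $x\leqslant q$, $x\nleqslant p$; item (3) follows from the observation that $x\nleqslant m$ with $m$ maximal forces $x\vee m=1$, whence Proposition \ref{mibe} applies to $x\wedge m$. None of this annihilator bookkeeping appears in your write-up, so as a proof of Proposition \ref{pmb} it is substantially incomplete.

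Your argument for (4) itself is correct and is essentially the paper's: the paper likewise takes a prime minimal among those above the Baer element and below $p$, applies Proposition \ref{epbe}(\ref{mtp}), and concludes by maximality. Two corrections to your closing commentary, though. First, the caveat about existence is unnecessary: the join of a \emph{chain} of Baer elements is always a Baer element, because a compact $c$ below the join lies below some member of the chain, and hence so does $c\pp$; thus Zorn's lemma produces maximal Baer elements below $p$ with no extra hypotheses --- this is exactly how the paper opens its proof of (4), and it does not require the join-closure conditions of Proposition \ref{cujo}, which concern binary and arbitrary joins rather than directed ones. Second, your claim that primeness of $p$ is ``never actually invoked'' is false: it is used to seed your own Zorn argument, since the set $S=\{r\in\spec\mid b\leqslant r\leqslant p\}$ is nonempty precisely because $p$ is prime. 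For a non-prime ceiling there need be no prime element in the interval $[b,p]$ at all (a minimal prime over $b$ exists but need not lie below $p$), and the construction of $q$ would not get started.
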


\begin{proof}
(1) Let us consider the case: $x\leqslant p$. Then by hypothesis, $x\wedge p =x\in \z$, and we are done. So, let $x\nleqslant p$. Then there exists an $y\in L$ with $y\leqslant x$ and $y\nleqslant p$. Suppose $u$ and $v$ are two compact elements in $L$ with $u\p \leqslant v\p$ and $u\leqslant p$. From Lemma \ref{pae}(\ref{mip}), we have $(uy)\p \leqslant (vy)\p$. Since $uy\leqslant x\wedge p$ and by hypothesis, $x\wedge p\in \z$, we have $vy\leqslant x\wedge p$, and hence $vy\leqslant p$. Since $p$ is prime and $y\nleqslant p$, it follows that $v\leqslant p$. Therefore $p$ is a Baer element in $L$.

(2) Let us suppose that $x\leqslant q$ and $x\nleqslant p$. Suppose $u$, $v\in \cm$, $u\p \leqslant v\p$ and $u\leqslant p$. By Lemma \ref{pae}(\ref{mip}), $(ux)\p \leqslant (vx)\p$. Since $ux\leqslant p\wedge q$ and $p\wedge q \in \z$, we have \[vx\leqslant p\wedge q\leqslant p.\] Since $p$ is prime and $x\nleqslant p$, we must have $v\leqslant p$ implying that $p$ is a Baer element in $L$. Similarly, we can show that $q$ is also a Baer element in $L$.

(3) Since $x\nleqslant m$, this means $x\vee m=1$, and hence by Proposition \ref{mibe} we have the desired claim. 

(4) Let us define a set \[S:=\left\{x\in \z \mid x\leqslant p\right\}.\] Since $0\in S$ by Proposition \ref{epbe}(\ref{zao}), the set $S$ must be nonempty. Therefore, by Zorn's lemma, $S$ has a maximal element, say $m$. Now, $m=p$ if and only if $p$ is a prime Baer element. If $m<p$, then there exists a prime element  $q$ which is minimal with respect to above $m$ and below $p$. By Proposition \ref{epbe}(\ref{mtp}), $q$ is a Baer element, and hence $q\neq p$. This implies either $q=m$, hence $m$ is a prime element, or $m< q$,  contradicting maximality of $m$.
\end{proof}

\smallskip

\msection{Baer closures and their applications}\label{bcl}

We shall now introduce a closure  on a multiplicative lattice that will assign a unique Baer element to each element of the lattice and will give another alternative definition of a Baer element. Furthermore, using these closures, we shall study some distinguished classes of Baer elements in due course.

\begin{definition}\label{dbl}
The \emph{Baer closure} on a multiplicative lattice $L$ is the map $\cz\colon L \to L$ defined as
\[\cz(a):=\bigwedge\left\{ x\in \z \mid a\leqslant x\right\}.\]
\end{definition}

In the next proposition, we  shall  establish some essential properties of Baer closures.

\begin{proposition}\label{lclk}
Suppose $a$ and $b$ are elements in a  multiplicative lattice $L$. Then the following hold.
\begin{enumerate}
		
\item\label{iclk} $\cz(a)$ is the smallest Baer element such that $a\leqslant \cz(a)$.
		
\item\label{altd}   $\cz(a)=a$ if and only if $a$ is a Baer element.
		
\item \label{ckr}
$ \cz(a) =1$ if and only if $a=1$.
		
\item \label{ssz}
$\cz(0)=0$.
		
\item\label{ijcl} If $a\leqslant b$, then $\cz(a)\leqslant \cz(b).$
		
\item\label{clcl} $\cz(\cz(a))=\cz(a).$
		
\item \label{clsq}  $\sqrt{a}\leqslant \cz(a)$.
		
\item\label{craca} $\cz(a)=\cz(\sqrt{a})$.
		
\item\label{clan} $\cz(a^n)=\cz(a)$, for any $n\in \mathds{N}^+$.
		
\item\label{clab} $\cz(a) \vee \cz(b)\leqslant \cz( a\vee b )=\cz(\cz(a)\vee\cz(b)).$	
		
\item\label{tccl}  $\cz(ab)=\cz(a\wedge b)=\cz(a)\wedge \cz(b)$.
		
\item\label{clijk}If $L$ is a $B$-multiplicative lattice, then $\cz(ab)=\cz(a)\cz(b)$.
\end{enumerate}
\end{proposition}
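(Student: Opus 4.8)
The plan is to sandwich $\cz(ab)$ between $\cz(a)\cz(b)$ and itself using two elementary inequalities; the entire role of the $B$-multiplicative hypothesis is to guarantee that $\cz(a)\cz(b)$ is a Baer element in the first place. Indeed, since $L$ is a $B$-multiplicative lattice we have $c\pp c\pp=c\pp$ for every compact $c$, so Theorem \ref{cup} applies and the product of any two Baer elements is again a Baer element. In particular, as $\cz(a),\cz(b)\in\z$, their product $\cz(a)\cz(b)$ lies in $\z$; this is the observation on which everything else hangs.

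For the inequality $\cz(ab)\leqslant\cz(a)\cz(b)$, I would begin from $a\leqslant\cz(a)$ and $b\leqslant\cz(b)$ and invoke monotonicity of multiplication, Lemma \ref{bip}(\ref{monj}), to obtain $ab\leqslant\cz(a)\cz(b)$. Since $\cz(a)\cz(b)$ is a Baer element lying above $ab$, while $\cz(ab)$ is by part (\ref{iclk}) the \emph{smallest} Baer element above $ab$, the inequality follows at once.

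For the reverse inequality $\cz(a)\cz(b)\leqslant\cz(ab)$, I would apply Lemma \ref{bip}(\ref{mul}) to write $\cz(a)\cz(b)\leqslant\cz(a)\wedge\cz(b)$, and then rewrite the right-hand side using the already-established part (\ref{tccl}), which gives $\cz(a)\wedge\cz(b)=\cz(ab)$. Combining the two inequalities yields $\cz(ab)=\cz(a)\cz(b)$.

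I do not expect a genuine obstacle here: once the product of two Baer elements is known to be Baer, the argument is purely formal. The only subtle point is the hypothesis itself—without the $B$-multiplicative assumption, $\cz(a)\cz(b)$ need not be a Baer element, so the first inequality could fail. This is precisely why this part must be stated conditionally, whereas part (\ref{tccl}) holds unconditionally.
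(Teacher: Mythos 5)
Your proof is correct. The paper itself only says of part (\ref{clijk}) that ``the proof is similar to (\ref{tccl})'', i.e.\ it intends you to re-run the minimal-prime argument of (\ref{tccl}): show $\cz(a)\cz(b)$ is a Baer element above $ab$ (which, exactly as in your first step, needs Theorem \ref{cup} together with the $B$-hypothesis), and then show it is the \emph{smallest} such by writing an arbitrary Baer element $x\geqslant ab$ as the meet of its minimal primes, each of which is Baer by Proposition \ref{epbe}(\ref{mtp}), and arguing $\cz(a)\cz(b)\leqslant\cz(a)\wedge\cz(b)\leqslant p$ for each such prime $p$. Your route to the reverse inequality is different and shorter: instead of repeating that minimal-prime machinery, you invoke the already-established identity $\cz(a)\wedge\cz(b)=\cz(ab)$ from (\ref{tccl}) as a black box, together with $\cz(a)\cz(b)\leqslant\cz(a)\wedge\cz(b)$ from Lemma \ref{bip}(\ref{mul}). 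This is legitimate since (\ref{tccl}) is unconditional and proved before (\ref{clijk}), and it isolates the role of the $B$-hypothesis cleanly: it is needed only once, to make $\cz(a)\cz(b)$ Baer so that the minimality of $\cz(ab)$ from part (\ref{iclk}) can bite. What the paper's (implicit) approach buys is self-containedness of the argument; what yours buys is economy and a sharper observation, namely that the inequality $\cz(a)\cz(b)\leqslant\cz(ab)$ holds in \emph{every} multiplicative lattice, with the $B$-condition entering only for the opposite inequality.
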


\begin{proof}
(1) From Definition \ref{dbl} it follows that $a\leqslant \cz(a)$. By Proposition \ref{epbe}(\ref{ijam}) we know that $\cz(a)$ is a Baer element in $L$. Now suppose $y$ is a Baer element with the property $a\leqslant y$. Then from Definition \ref{dbl}, it implies that $\cz(a)\leqslant y$.
	
(2) If $a$ is a Baer element, then by Definition \ref{dbl}, we have $\cz(a)=a$. Conversely, if the identity holds, then $a$ is a Baer element  by (\ref{iclk}).
	
(3) Since $1$ is a Baer element by Proposition \ref{epbe}(\ref{zao}), the claim $\cz(1)=1$ follows by (\ref{altd}). The converse is obvious.
	
(4) Follows from Proposition \ref{epbe}(\ref{zao}) and (\ref{altd}).
	
(5) Let us suppose that $x\in \z$ with $b\leqslant x$. Then $a\leqslant x$ and hence $\cz(a)\leqslant x$. Since $x$ was an arbitrary Baer element with the property $b\leqslant x$, we must have $\cz(a)\leqslant \cz(b).$
	
(6) By (\ref{iclk}), we have $\cz(a)\leqslant \cz(\cz(a)).$ Now  suppose that $x\in \z$ with $\cz(a)\leqslant x$. Since by definition, $\cz(\cz(a))$ is the infimum of all such $x\in \z$, we must have $\cz(\cz(a))\leqslant \cz(a).$
	
(7) Let us suppose that $\mathrm{Spec}_B(L)$ denotes the set of all prime Baer elements of $L$. Now by applying (\ref{iclk}), (\ref{empp}), and Proposition \ref{epbe}(\ref{bepb}), we have
\begin{align*}
\sqrt{a}&=\bigwedge \{ p\in \spec \mid a\leqslant p\}\\&\leqslant \bigwedge \{ p\in \mathrm{Spec}_B(L) \mid a\leqslant p\}\\&\leqslant \bigwedge \{ p\in \mathrm{Spec}_B(L) \mid \cz(a)\leqslant p\}=\cz(a). 	
\end{align*}
	
(8) Applying (\ref{ijcl}), (\ref{clsq}), and (\ref{clcl}), we have $\cz(\sqrt{a})\leqslant \cz(a)$. On the other hand, applying (\ref{ijcl}) on $a\leqslant \sqrt{a},$ we get $\cz(a)\leqslant \cz(\sqrt{a}).$
	
(9) Since $a^n\leqslant a$, by (\ref{ijcl}), we have $\cz(a^n)\leqslant \cz(a).$ Let us now suppose $k\in \cm$ and $k\leqslant \cz(a).$ This implies that $k\leqslant x$ for all $x\in \z$ with $a\leqslant x$. Since $x\in \z$ and $a\p\leqslant (a^n)\p$, we have $a^n\leqslant x$. Therefore $k\leqslant \cz(a^n),$ proving that $\cz(a)\leqslant \cz(a^n).$
	
(10) Applying (\ref{iclk}) and (\ref{ijcl}), we obtain $\cz(a\vee b)\leqslant \cz(\cz(a)\vee \cz(b)),$ whereas applying (\ref{ijcl}), we have $\cz(a)\vee \cz(b)\leqslant \cz(a\vee b)$, and  applying (\ref{clcl}) on it gives \[\cz(\cz(a)\vee \cz(b))\leqslant \cz(\cz(a\vee b))=\cz(a\vee b).\]
	
(11) Since by Lemma \ref{bip}(\ref{pxyx}), we have $ab\leqslant a$ and $ab\leqslant b$, applying Lemma \ref{bip}(\ref{mul}), Proposition \ref{epbe}(\ref{ijam}), and (\ref{ijcl}), we obtain
\[\cz(ab)\leqslant \cz(a\wedge b)\leqslant \cz(a)\wedge \cz(b)\in \z.\]
Therefore, by (\ref{iclk}), to obtain the desired equalities, it suffices to show that $\cz(a)\wedge \cz(b)$ is the smallest Baer element that is above $ab$. Let us suppose that $x$ is a Baer element in $L$ with $ab\leqslant x$. We shall show that \[\cz(a)\wedge \cz(b)\leqslant x.\]
By $\mathrm{Min}_L(x)$, let us denote the set of all minimal prime elements in $L$ that are above $x$. By Proposition \ref{epbe}(\ref{mtp}), if $p\in \mathrm{Min}_L(x)$, then $p$ is a Baer element in $L$. By Proposition \ref{epbe}(\ref{bepb}), we have \[x=\bigwedge\left\{p\mid  p\in \mathrm{Min}_L(x) \right\}.\] Now if $p\in \mathrm{Min}_L(x)$, then $ab\leqslant p$, and hence, either $a\leqslant p$ or $b\leqslant p$. Since $p$ is a Baer element, this implies that $\cz(a)\wedge \cz(b)\leqslant p$. From this, it follows that $\cz(a)\wedge \cz(b)\leqslant x$, as required.
	
(12) The proof is similar to (\ref{tccl}).
\end{proof}

\begin{remark}
In the context of frames, another closure operation (see \cite[Definition and Remarks 5.1.]{MZ03}) is well-known for $d$-elements (that is, Baer elements in our context). 
For an element $x$ in a frame $L$, the \emph{closure} $d$ \emph{of} $x$ is defined as
\[d(x):=\bigvee \{ c\pp\mid  c\leqslant x, c\in \cm\}.\]
In this paper, we have not considered the above closure operator because in general, joins of $d$-elements are not $d$-elements, and we want to have our closure $\cz(\cdot)$ be such that $\cz(x)$ is the smallest $d$-element that is above $x$ (see Proposition \ref{lclk}(\ref{iclk})).
\end{remark}

In Section \ref{zid},  we have studied closedness properties of Baer elements under meets and products. The following proposition gives some equivalent criteria on closedness of Baer elements under joins. We are not aware of an explicit characterization of multiplicative lattices in which Baer elements are closed under joins.
\begin{proposition}\label{cujo}
In a multiplicative lattice $L$, the following are equivalent.
\begin{enumerate}
\item\label{cuj} If $a$ and $b$ are Baer elements in $L$, then so is $a\vee b$.
		
\item Whenever $a$, $b\in L$, we have $\cz(a\vee b)=\cz(a)\vee \cz(b).$
		
\item\label{ajp} If $\{a_{\lambda}\}_{\lambda\in \Lambda}\subseteq \z$, then $\bigvee_{\lambda\in \Lambda} a_{\lambda}\in \z.$
		
\item If $\{a_{\lambda}\}_{\lambda\in \Lambda}\subseteq L$, then $\cz\left(  \bigvee_{\lambda\in \Lambda} a_{\lambda}\right)=\bigvee_{\lambda\in \Lambda} \cz(a_{\lambda})$.
\end{enumerate}
\end{proposition}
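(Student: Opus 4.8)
The plan is to prove the four conditions equivalent by running them around the cycle $(1)\Rightarrow(3)\Rightarrow(4)\Rightarrow(2)\Rightarrow(1)$. The point of this ordering is that the arrows $(3)\Rightarrow(4)$, $(4)\Rightarrow(2)$, and $(2)\Rightarrow(1)$ are all formal, being immediate consequences of the fact (Proposition \ref{lclk}(\ref{iclk})) that $\cz(x)$ is the smallest Baer element above $x$, together with monotonicity (Proposition \ref{lclk}(\ref{ijcl})) and the identity $\cz(x)=x\Leftrightarrow x\in\z$ (Proposition \ref{lclk}(\ref{altd})). All of the genuine work is therefore isolated in the single step $(1)\Rightarrow(3)$, which upgrades closure under binary (hence finite) joins to closure under arbitrary joins.

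For $(1)\Rightarrow(3)$, I would argue directly from the original definition of a Baer element. Given a family $\{a_\lambda\}_{\lambda\in\Lambda}\subseteq\z$, set $s=\bigvee_{\lambda}a_\lambda$, and let $c\in\cm$ with $c\leqslant s$. Since $L$ is compactly generated and $c$ is compact, finitely many of the $a_\lambda$ already dominate $c$, say $c\leqslant a_{\lambda_1}\vee\cdots\vee a_{\lambda_n}$. Condition $(1)$, applied inductively, shows that this finite join is a Baer element; since $c$ is compact and lies below it, the defining property of Baer elements gives $c\pp\leqslant a_{\lambda_1}\vee\cdots\vee a_{\lambda_n}\leqslant s$. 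As $c$ was an arbitrary compact element below $s$, we conclude $s\in\z$. This is the only place where compactness of $c$ and compact generation of $L$ are used, and it is the crux of the proposition.

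The remaining three arrows are routine. For $(3)\Rightarrow(4)$: given $\{a_\lambda\}\subseteq L$, the element $\bigvee_\lambda\cz(a_\lambda)$ is a join of Baer elements, hence Baer by $(3)$, and it dominates $\bigvee_\lambda a_\lambda$; minimality of the Baer closure then gives $\cz(\bigvee_\lambda a_\lambda)\leqslant\bigvee_\lambda\cz(a_\lambda)$, while monotonicity of $\cz$ applied to each $a_\lambda\leqslant\bigvee_\mu a_\mu$ gives the reverse inequality. The arrow $(4)\Rightarrow(2)$ is just the specialization of $(4)$ to a two-element family. Finally, for $(2)\Rightarrow(1)$: if $a$ and $b$ are Baer, then $\cz(a)=a$ and $\cz(b)=b$ by Proposition \ref{lclk}(\ref{altd}), so $(2)$ yields $\cz(a\vee b)=\cz(a)\vee\cz(b)=a\vee b$, whence $a\vee b$ is a fixed point of $\cz$ and therefore Baer.

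I expect the main obstacle to be precisely the step $(1)\Rightarrow(3)$: the hypotheses only provide closure of Baer elements under binary joins, and bridging the gap to arbitrary joins is not a formal lattice manipulation but genuinely needs the compactness hypotheses standing in force throughout the paper. One should take care that the finite join $a_{\lambda_1}\vee\cdots\vee a_{\lambda_n}$ is handled by induction on $(1)$ rather than by invoking a not-yet-available finite-join closure directly.
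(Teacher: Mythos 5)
Your proof is correct and takes essentially the same approach as the paper: the paper likewise isolates $(1)\Rightarrow(3)$ as the only nontrivial arrow, using compactness of the element below $\bigvee_{\lambda\in\Lambda}a_\lambda$ to reduce to a finite join (which is Baer by $(1)$), and dismisses the remaining implications as straightforward. The only cosmetic difference is that you verify $(1)\Rightarrow(3)$ with the original definition (via $c\pp$), whereas the paper uses the equivalent $u\p\leqslant v\p$ characterization from Remark \ref{edbe}.
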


\begin{proof}
We shall only show (1)$\Rightarrow$(3). The rest of the implications are straightforward. Suppose $u$, $v\in \cm$,  $u\p \leqslant v\p$, and $u\leqslant \bigvee_{\lambda\in \Lambda} a_{\lambda}$. Since $L$ is compactly generated, there exists a finite subset $\{ \lambda_1, \ldots, \lambda_n\}$ of $\Lambda$ such that $u\leqslant \bigvee_{i=1}^n a_{\lambda_i}$. Since by hypothesis $\bigvee_{i=1}^n a_{\lambda_i}$ is a Baer element, we have \[v\leqslant \bigvee_{i=1}^n a_{\lambda_i}\leqslant \bigvee_{\lambda\in \Lambda} a_{\lambda},\] implying that $\bigvee_{\lambda\in \Lambda} a_{\lambda}\in \z.$
\end{proof}

\begin{remark}
The equivalence of (\ref{cuj}) and (\ref{ajp}) in Proposition \ref{cujo},  extends the corresponsing result on $d$-ideals in rings (see \cite[Remark 3.2]{Dub19}).
\end{remark}

We now view Baer closures as  nuclei of various types. Recall from \cite[p.\,218]{NR88} that a map $\phi\colon L \to L$ is called a \emph{quantic nucleus} if, whenever $a$, $b\in L$, we have 
$a\leqslant \phi(a)$,
$\phi(\phi(a))=\phi(a)$, and
$\phi(ab)=\phi(a)\phi(b)$.

\begin{proposition}
If $L$ is a $B$-multiplicative lattice, then every Baer closure is a quantic nucleus.
\end{proposition}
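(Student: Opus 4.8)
The plan is to verify the three defining conditions of a quantic nucleus directly for the map $\cz$, observing that each of them has already been secured in Proposition \ref{lclk}. Recall that $\phi\colon L\to L$ is a quantic nucleus precisely when it is extensive ($a\leqslant\phi(a)$), idempotent ($\phi(\phi(a))=\phi(a)$), and multiplicative ($\phi(ab)=\phi(a)\phi(b)$). I would therefore fix arbitrary $a$, $b\in L$ and treat these three requirements in turn, in each case reducing to a part of the preceding proposition.

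For extensivity, Proposition \ref{lclk}(\ref{iclk}) asserts that $\cz(a)$ is the smallest Baer element above $a$; in particular $a\leqslant\cz(a)$, which is exactly the first condition. Idempotency, namely $\cz(\cz(a))=\cz(a)$, is precisely the content of Proposition \ref{lclk}(\ref{clcl}). Finally, the multiplicative law $\cz(ab)=\cz(a)\cz(b)$ is exactly Proposition \ref{lclk}(\ref{clijk}), and it is here, and only here, that the standing hypothesis that $L$ is a $B$-multiplicative lattice enters.

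The proof thus amounts to collecting these three facts by direct quotation, so no genuine obstacle remains at this stage. The substantive work lies upstream, in establishing Proposition \ref{lclk}(\ref{clijk}): that identity rests on the equality $\cz(ab)=\cz(a)\wedge\cz(b)$ from part (\ref{tccl}) together with the defining relation $(c\pp)^2=c\pp$ of a $B$-multiplicative lattice (via Theorem \ref{cup}), which is what upgrades the meet $\cz(a)\wedge\cz(b)$ to the product $\cz(a)\cz(b)$. Once that multiplicative identity is available, the three nucleus axioms follow immediately, completing the argument.
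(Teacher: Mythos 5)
Your proof is correct and matches the paper's own argument exactly: the paper also derives the result by citing parts (\ref{iclk}), (\ref{clcl}), and (\ref{clijk}) of Proposition \ref{lclk} for extensivity, idempotency, and the multiplicative law respectively, with the $B$-multiplicative hypothesis entering only through (\ref{clijk}). Your added remark correctly locates where the real work lies, but no further argument is needed.
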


\begin{proof}
Follows from (\ref{iclk}), (\ref{clcl}), and (\ref{clijk}) of Proposition \ref{lclk}.
\end{proof}

According to \cite[Definition 1]{Sim78}, the map $\phi$ is called a \emph{nucleus} if it satisfies 
$a\leqslant \phi(a)$,
$\phi(\phi(a))=\phi(a)$, and
$\phi(a\wedge b)=\phi(a)\wedge \phi(b)$, for all $a$, $b$ in $L$. By  \cite[Definition 1.1]{BH85}, the map $\phi$ is said to be a \emph{multiplicative nucleus} if it satisfies $\phi(a)=1$ if and only if $a=1$ and $\phi(ab)=\phi(a\wedge b)=\phi(a)\wedge \phi(b)$, for all $a$, $b\in L$.

\begin{proposition}
Every Baer closure on a multiplicative lattice is a  nucleus and a multiplicative nucleus.
\end{proposition}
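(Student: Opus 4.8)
The plan is to verify each of the three defining conditions for a nucleus and for a multiplicative nucleus by quoting the appropriate parts of Proposition \ref{lclk}, since every required identity has already been established there. First I would recall the definitions: a \emph{nucleus} requires $a\leqslant\cz(a)$, idempotency $\cz(\cz(a))=\cz(a)$, and meet-preservation $\cz(a\wedge b)=\cz(a)\wedge\cz(b)$; a \emph{multiplicative nucleus} requires the normalization condition $\cz(a)=1\iff a=1$ together with $\cz(ab)=\cz(a\wedge b)=\cz(a)\wedge\cz(b)$.

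For the nucleus part, the inflationary law $a\leqslant\cz(a)$ is immediate from Proposition \ref{lclk}(\ref{iclk}) (indeed from Definition \ref{dbl}), idempotency is exactly Proposition \ref{lclk}(\ref{clcl}), and meet-preservation $\cz(a\wedge b)=\cz(a)\wedge\cz(b)$ is the middle equality of Proposition \ref{lclk}(\ref{tccl}). For the multiplicative nucleus part, the normalization condition $\cz(a)=1\iff a=1$ is precisely Proposition \ref{lclk}(\ref{ckr}), and the chain of equalities $\cz(ab)=\cz(a\wedge b)=\cz(a)\wedge\cz(b)$ is the full statement of Proposition \ref{lclk}(\ref{tccl}). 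Thus both claims reduce to assembling these cited items.

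I would write this as a short proof reading, for instance: the inflationary and idempotency laws hold by Proposition \ref{lclk}(\ref{iclk}) and (\ref{clcl}); by Proposition \ref{lclk}(\ref{tccl}) we have $\cz(ab)=\cz(a\wedge b)=\cz(a)\wedge\cz(b)$, which supplies the meet-preservation needed for a nucleus and, together with $\cz(a)=1\iff a=1$ from Proposition \ref{lclk}(\ref{ckr}), the two conditions needed for a multiplicative nucleus.

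There is essentially no obstacle here, since all the analytic content lives in Proposition \ref{lclk}; the only point requiring a moment's care is that the definition of a nucleus demands preservation of binary meets specifically (not arbitrary meets), and that the multiplicative nucleus definition bundles together the product and meet identities $\cz(ab)=\cz(a\wedge b)$ — both of which are already packaged in part (\ref{tccl}), so the proof is a one-line citation once the definitions are matched up. I would simply make sure to invoke (\ref{tccl}) for the joint equality rather than proving the product and meet statements separately.
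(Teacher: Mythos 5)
Your proposal is correct and matches the paper's proof exactly: the paper likewise establishes the nucleus claim by citing Proposition \ref{lclk}(\ref{iclk}), (\ref{clcl}), and (\ref{tccl}), and the multiplicative nucleus claim by citing (\ref{ckr}) and (\ref{tccl}). Your added care in checking that the definitions ask only for binary meets and that (\ref{tccl}) bundles both required equalities is sound but does not change the argument.
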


\begin{proof}
The first claim follows from (\ref{iclk}), (\ref{clcl}), and (\ref{tccl}) of Proposition \ref{lclk}, whereas the second follows from (\ref{ckr}) and (\ref{tccl}) of Proposition \ref{lclk}.
\end{proof}

Here we shall gather a few results on Baer closures. The proofs of these statements are either identical or similar to those presented in \cite{NR88} and \cite{BH85}.

\begin{proposition}
If $L$ is a $B$-multiplicative lattice, then every Baer closure as a quantic nucleus satisfies the following identities:
\[\cz(ab)=\cz(a\cz(b))=\cz(\cz(a)b)=\cz(\cz(a)\cz(b)),\]
for all $a,$ $b\in L$.
\end{proposition}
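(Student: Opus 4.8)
The plan is to prove the chain of four identities
\[
\cz(ab)=\cz(a\cz(b))=\cz(\cz(a)b)=\cz(\cz(a)\cz(b))
\]
by reducing each expression to the already-established formula for $\cz$ of a product in a $B$-multiplicative lattice, namely Proposition \ref{lclk}(\ref{clijk}), which gives $\cz(xy)=\cz(x)\cz(y)$. The key observation is that, since $\cz$ is idempotent by Proposition \ref{lclk}(\ref{clcl}), applying $\cz$ to a factor inside a product should wash out once we pass through $\cz$ of the whole product.

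\medskip

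\emph{First I would} handle the middle two equalities together by symmetry. Applying Proposition \ref{lclk}(\ref{clijk}) to the product $a\cz(b)$ gives $\cz(a\cz(b))=\cz(a)\cz(\cz(b))$, and then Proposition \ref{lclk}(\ref{clcl}) collapses $\cz(\cz(b))$ to $\cz(b)$, so that $\cz(a\cz(b))=\cz(a)\cz(b)$. By the same argument applied to $\cz(a)b$ we obtain $\cz(\cz(a)b)=\cz(\cz(a))\cz(b)=\cz(a)\cz(b)$. Likewise, applying Proposition \ref{lclk}(\ref{clijk}) to $\cz(a)\cz(b)$ and then idempotence to both factors yields $\cz(\cz(a)\cz(b))=\cz(\cz(a))\cz(\cz(b))=\cz(a)\cz(b)$. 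Finally, the leftmost term is immediate: $\cz(ab)=\cz(a)\cz(b)$ is exactly Proposition \ref{lclk}(\ref{clijk}). Thus all four expressions equal the common value $\cz(a)\cz(b)$, and the chain of equalities follows.

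\medskip

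\emph{The main point to be careful about} is simply that every use of the product formula $\cz(xy)=\cz(x)\cz(y)$ requires the $B$-multiplicative hypothesis, which is why it is stated in the proposition; each of the four reductions invokes it once. There is no genuine obstacle here, since the whole argument is a bookkeeping exercise combining Proposition \ref{lclk}(\ref{clijk}) with idempotence from Proposition \ref{lclk}(\ref{clcl}); the only thing to verify is that $\cz$ of a compound product distributes correctly and that the repeated $\cz$ on interior factors really does collapse. I would present the computation as a single short display deriving the common value, remarking that the symmetry in $a$ and $b$ disposes of the remaining equalities without further work.
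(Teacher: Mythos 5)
Your proof is correct, and every step checks out: Proposition \ref{lclk}(\ref{clijk}) gives $\cz(xy)=\cz(x)\cz(y)$ in a $B$-multiplicative lattice, so one application of it to each of the four products, followed by the collapse $\cz(\cz(a))=\cz(a)$ and $\cz(\cz(b))=\cz(b)$ from Proposition \ref{lclk}(\ref{clcl}), reduces all four expressions to the common value $\cz(a)\cz(b)$. Note, though, that the paper prints no proof of this proposition at all; it defers to \cite{NR88}, and the argument there is different in kind. In \cite{NR88} a quantic nucleus $j$ is only required to satisfy the lax inequality $j(a)j(b)\leqslant j(ab)$ together with being a closure operator, and the four identities are obtained by a squeeze: from
\[ab\leqslant a\,j(b)\leqslant j(a)\,j(b)\leqslant j(ab),\]
monotonicity and idempotence yield
\[j(ab)\leqslant j(a\,j(b))\leqslant j(j(a)\,j(b))\leqslant j(j(ab))=j(ab),\]
forcing equality throughout. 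Your route instead exploits the stronger \emph{equality} $\cz(ab)=\cz(a)\cz(b)$, which the paper has established for Baer closures on $B$-multiplicative lattices (and which is in fact built into this paper's stated definition of quantic nucleus, a stronger requirement than the usual lax condition of \cite{NR88}); as a result your proof is a purely equational collapse that needs neither monotonicity nor extensivity $a\leqslant\cz(a)$, and it actually establishes the identities for \emph{any} map satisfying the two equations $\phi(\phi(a))=\phi(a)$ and $\phi(ab)=\phi(a)\phi(b)$. What the classical squeeze buys is generality, since it works for quantic nuclei in the usual lax sense; what your computation buys is brevity, and it is the natural argument given this paper's lemma inventory. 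Both are complete proofs.
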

Defining the join in $\z$  by $\bigvee'_{i\in I} a_i:=\cz\left( \bigvee_{i\in I} a_i\right),$ we obtain the following (\textit{cf}. \cite[Theorem 2.1]{NR88}).

\begin{theorem}
If $\cz$ is a quantic nucleus, then $\z$ is a multiplicative lattice via $a\odot b =  \cz(ab);$
and $\cz\colon L \to \z$ is a multiplicative lattice homomorphism. Moreover, every surjective multiplicative lattice
homomorphism arises (up to
isomorphism) in this manner.
\end{theorem}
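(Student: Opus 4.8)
The plan is to establish the theorem in three parts, mirroring the classical quantale-theoretic result cited from \cite{NR88}. First I would verify that $(\z, \leqslant)$ with the modified join $\bigvee'$ and the product $a\odot b=\cz(ab)$ is indeed a multiplicative lattice. The meet in $\z$ is inherited from $L$ (this is legitimate by Proposition \ref{epbe}(\ref{ijam}), since $\z$ is closed under arbitrary meets), while the join must be corrected by applying $\cz$ because joins of Baer elements need not be Baer. The top and bottom elements $1$ and $0$ lie in $\z$ by Proposition \ref{epbe}(\ref{zao}). To see that $\odot$ is well-defined and lands in $\z$, note $\cz(ab)\in\z$ by Proposition \ref{epbe}(\ref{ijam}); associativity and commutativity of $\odot$ follow from the quantic-nucleus identity $\cz(a\cz(b))=\cz(ab)=\cz(\cz(a)b)$ established in the preceding proposition, combined with associativity and commutativity of the underlying product on $L$. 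The multiplicative identity is $\cz(1)=1$ (using Proposition \ref{lclk}(\ref{ckr})), since $a\odot 1=\cz(a\cdot 1)=\cz(a)=a$ for $a\in\z$.

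The key structural point is that $\odot$ distributes over the corrected joins $\bigvee'$. Here I would compute, for $a\in\z$ and a family $\{b_i\}\subseteq\z$,
\[
a\odot\Bigl({\bigvee_{i}}'\,b_i\Bigr)=\cz\Bigl(a\,\cz\bigl(\textstyle\bigvee_i b_i\bigr)\Bigr)=\cz\Bigl(a\,\textstyle\bigvee_i b_i\Bigr)=\cz\Bigl(\textstyle\bigvee_i ab_i\Bigr),
\]
where the middle equality uses the quantic-nucleus identity $\cz(a\cz(c))=\cz(ac)$ and the last uses distributivity of the original product over joins in $L$. On the other side, ${\bigvee_i}'\,(a\odot b_i)=\cz\bigl(\bigvee_i\cz(ab_i)\bigr)$, and this equals $\cz\bigl(\bigvee_i ab_i\bigr)$ because $\cz$ is idempotent and monotone (Proposition \ref{lclk}(\ref{clcl}),(\ref{ijcl})) and absorbs interior applications of itself on joins exactly as in Proposition \ref{lclk}(\ref{clab}),(\ref{ajp}-style arguments). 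Thus both sides agree, giving the required infinite distributivity.

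Next I would check that $\cz\colon L\to\z$ is a multiplicative lattice homomorphism. It preserves order by Proposition \ref{lclk}(\ref{ijcl}); it preserves binary meets by Proposition \ref{lclk}(\ref{tccl}); it preserves the corrected binary joins essentially by definition of $\bigvee'$ together with Proposition \ref{lclk}(\ref{clab}); and it preserves products since $\cz(ab)=\cz(a)\odot\cz(b)$ unwinds, via $\odot$ and the quantic-nucleus identities, to the identity $\cz(ab)=\cz(\cz(a)\cz(b))$ furnished by the preceding proposition. Surjectivity is immediate because $\cz$ restricted to $\z$ is the identity (Proposition \ref{lclk}(\ref{altd})).

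Finally, for the converse ``every surjective homomorphism arises this way up to isomorphism'', I would take a surjective multiplicative lattice homomorphism $\psi\colon L\to M$, define $\phi=\iota\circ\psi$ where $\iota$ is a right adjoint (section) to $\psi$ picking the least preimage, and verify $\phi$ is a quantic nucleus whose fixed-point set is order-isomorphic to $M$ with the transported structure; this is the standard nucleus/quotient correspondence for quantales. I expect the main obstacle to be this last part: one must carefully construct the section, confirm it exists (using completeness of $L$ and that $\psi$ preserves arbitrary joins, so it has a right adjoint), and check that the induced $\phi$ satisfies $\phi(ab)=\phi(a)\phi(b)$ rather than merely the meet-nucleus identity. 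Since the excerpt already flags that the argument is ``identical or similar to those presented in \cite{NR88}'', I would cite \cite[Theorem 2.1]{NR88} for this direction and supply only the adaptation needed to match our corrected-join convention on $\z$.
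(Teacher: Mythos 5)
Your proposal takes essentially the same route as the paper, which offers no independent proof and simply defers to \cite[Theorem 2.1]{NR88}: you reconstruct that standard quantic-nucleus/quotient argument in full, with the lattice verifications correctly grounded in Proposition \ref{lclk} and the corrected-join convention on $\z$ handled properly. One small slip worth fixing: the right adjoint $\iota(m)=\bigvee\{x\in L\mid \psi(x)\leqslant m\}$ selects the \emph{greatest} preimage of $m$, not the least, though this does not affect the structure of your argument.
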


Since $\cz$ satisfies the property: $\cz(ab)=\cz(a)\wedge \cz(b)$ (see Proposition \ref{lclk}(\ref{tccl})), it follows that $\z$ is a frame with $\odot := \wedge$, and $\cz$ is called a \emph{localic nucleus} (see \cite[p.\,219]{NR88} and \cite{Ban94}).
Since $\cz$ is a multiplicative nucleus, in fact, we get more  (\textit{cf}. \cite[Lemma 1.2]{BH85}).

\begin{theorem}
$\z$ is a compact frame.
\end{theorem}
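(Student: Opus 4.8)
The plan is to take the frame structure on $\z$ as already given and to reduce the one remaining claim, compactness, to the standing assumption that $1$ is compact in $L$. First I would recall the setup just established: $\z$ is a frame whose meets are computed as in $L$ (Baer elements being closed under meets by Proposition \ref{epbe}(\ref{ijam})), whose join is $\bigvee'_{i\in I} a_i=\cz\!\left(\bigvee_{i\in I} a_i\right)$, and whose top element is $1$, which lies in $\z$ by Proposition \ref{epbe}(\ref{zao}). It therefore suffices to show that this top element is compact in the frame $\z$, that is, that whenever a family $\{a_\lambda\}_{\lambda\in\Lambda}\subseteq\z$ satisfies $\bigvee'_{\lambda\in\Lambda} a_\lambda=1$, some finite subjoin already equals $1$.

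The key device is Proposition \ref{lclk}(\ref{ckr}), which records that $\cz(x)=1$ if and only if $x=1$. Unwinding the definition of the join in $\z$, the hypothesis $\bigvee'_{\lambda\in\Lambda} a_\lambda=1$ reads $\cz\!\left(\bigvee_{\lambda\in\Lambda} a_\lambda\right)=1$, whence $\bigvee_{\lambda\in\Lambda} a_\lambda=1$ in $L$. At this point I would invoke the standing assumption that $1$ is compact in $L$: from $\bigvee_{\lambda\in\Lambda} a_\lambda=1$ I extract a finite subset $F\subseteq\Lambda$ with $\bigvee_{\lambda\in F} a_\lambda=1$ in $L$. Applying $\cz$ and using $\cz(1)=1$ then yields $\bigvee'_{\lambda\in F} a_\lambda=\cz\!\left(\bigvee_{\lambda\in F} a_\lambda\right)=1$ in $\z$, which is exactly compactness of the top element.

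I do not expect a genuine obstacle here; the only point requiring care is the bookkeeping that the join computed in $\z$ is $\cz$ applied to the join computed in $L$, together with the equivalence $\cz(x)=1\Leftrightarrow x=1$ that lets compactness pass through the closure. In effect, the fact that $\cz$ is a multiplicative nucleus is precisely what keeps the top compact after passing to the quotient frame $\z$, so the conclusion is the frame-theoretic shadow of the compactness of $1$ in $L$.
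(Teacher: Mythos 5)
Your proof is correct and is essentially the argument the paper intends: the paper gives no explicit proof, deferring to \cite[Lemma 1.2]{BH85} via the observation that $\cz$ is a multiplicative nucleus, and your write-up is precisely that mechanism spelled out --- the condition $\cz(x)=1\Leftrightarrow x=1$ from Proposition \ref{lclk}(\ref{ckr}) lets the standing compactness of $1$ in $L$ pass through the join $\bigvee'_{\lambda} a_\lambda=\cz\bigl(\bigvee_{\lambda} a_\lambda\bigr)$ of the quotient frame. No gaps; the bookkeeping on how joins in $\z$ are computed is exactly the point that needs care, and you handle it.
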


It is well-known that to check the primeness of an element in an algebraic frame, we need only to verify the prime condition with respect to compact elements. With the applications of Baer closures, we shall now show that in multiplicative lattices, Baer elements, which need to be shown as maximal, prime, semiprime, or meet-irreducible elements, enjoy similar privileges with respect to Baer elements.

\begin{proposition}
Suppose $L$ is a multiplicative lattice. A Baer element $m$ in $L$ is maximal if and only if $m$ is maximal among all Baer elements in $L$.
\end{proposition}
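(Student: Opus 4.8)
The plan is to prove the two implications separately, noting at the outset that $m\in\z$ throughout and that ``maximal among Baer elements'' carries with it (as does ``maximal'' in $L$) the requirement that $m$ be proper. The forward implication will need nothing beyond the definitions; the backward implication is where the Baer closure $\cz$ from Proposition \ref{lclk} earns its keep.

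For the forward direction, I would assume $m$ is a Baer element that is maximal in $L$. Then $m$ is proper, and by the definition of a maximal element there is no $x\in L$ at all with $m<x<1$; a fortiori there is no Baer element strictly between $m$ and $1$. Hence $m$ is maximal among the Baer elements of $L$, and this direction is immediate.

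For the backward direction, I would assume $m\in\z$ is maximal among all Baer elements (so $m$ is proper) and argue by contradiction: suppose $m$ is not maximal in $L$, so there exists $x\in L$ with $m<x<1$. The key step is to apply the Baer closure to $x$. By Proposition \ref{lclk}(\ref{iclk}), $\cz(x)\in\z$ and $x\leqslant\cz(x)$; by Proposition \ref{lclk}(\ref{ckr}), since $x<1$ we get $\cz(x)<1$, so $\cz(x)$ is a \emph{proper} Baer element. Chaining inequalities gives $m\leqslant x\leqslant\cz(x)<1$, so $\cz(x)$ is a proper Baer element above $m$. Maximality of $m$ among Baer elements then forces $m=\cz(x)$, whence $x\leqslant\cz(x)=m$; combined with $m\leqslant x$ this yields $m=x$, contradicting $m<x$. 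Therefore $m$ is maximal in $L$.

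The one genuinely nontrivial ingredient — the step I expect to be the crux rather than routine order-chasing — is ensuring that replacing $x$ by its closure cannot inflate a proper element up to $1$. This is precisely guaranteed by Proposition \ref{lclk}(\ref{ckr}), which says $\cz(x)=1$ if and only if $x=1$; with that result in hand the properness of $\cz(x)$ is automatic and the contradiction goes through cleanly. Everything else reduces to transitivity of $\leqslant$ and the defining property of a maximal element.
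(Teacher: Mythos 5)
Your proof is correct and follows essentially the same route as the paper: both arguments apply the Baer closure to a hypothetical element $x$ with $m<x<1$, invoke Proposition \ref{lclk}(\ref{iclk}) and the crucial fact $\cz(x)=1 \Leftrightarrow x=1$ from Proposition \ref{lclk}(\ref{ckr}), and contradict maximality of $m$ in $\z$. If anything, your version is marginally tidier, since by deducing $m=\cz(x)$ and hence $m=x$ you avoid the paper's implicit case split on whether the intermediate element lies in $\z$.
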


\begin{proof}
The `only if' part is obvious. Therefore, suppose that  $m$ is a maximal element among all Baer elements in $L$ and let $m< a< 1,$ for some $a\notin \z .$ We need to show that $m$ is a maximal element in $L$. Now we have
\[m=\cz(m)< a < \cz(a)< \cz(1) =1,
\]
where the first and last equalities follow respectively from Lemma \ref{lclk}(\ref{altd}) and Lemma \ref{lclk}(\ref{ckr}), and the middle strict inclusion follows from  Lemma \ref{lclk}(\ref{altd}) and the assumption that $a\notin \z$.
Since $m$ is a maximal element in $\z$ and since $\cz(a)\in \z$, we have a contradiction. Thus, $m$ is a maximal element in $L$.
\end{proof}

The following corollary extends the second part of \cite[Theorem 3.4]{Bha19}.

\begin{corollary}
Every  maximal Baer element in a multiplicative lattice is  prime.
\end{corollary}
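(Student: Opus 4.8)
The plan is to deduce the corollary from the preceding proposition together with the elementary fact that every maximal element of a multiplicative lattice is prime; the Baer hypothesis enters only through that proposition.

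First I would fix the reading of ``maximal Baer element'' as an element $m$ that is maximal among the (proper) Baer elements of $L$. The preceding proposition then does the essential work: it upgrades this to genuine maximality, giving $m\in\mx$. After this step the fact that $m$ is a Baer element plays no further role, which is precisely why the result can be stated as a corollary of that proposition.

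It then remains to check that an arbitrary maximal element $m\in\mx$ is prime. For this I would argue directly. Take $x$, $y\in L$ with $xy\leqslant m$ and suppose $x\nleqslant m$. By maximality $x\vee m=1$, so, using that $1$ is the multiplicative identity and that multiplication distributes over binary joins, $y=y(x\vee m)=yx\vee ym$. Since $yx=xy\leqslant m$ and $ym\leqslant y\wedge m\leqslant m$ by Lemma \ref{bip}(\ref{mul}), we conclude $y\leqslant m$. Thus $xy\leqslant m$ forces $x\leqslant m$ or $y\leqslant m$, i.e. $m$ is prime.

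I do not expect a real obstacle. The only points requiring care are the interpretation of ``maximal Baer element'' and the explicit appeal to the preceding proposition to pass from maximality among Baer elements to maximality in $L$; the maximal-implies-prime argument itself is a routine use of distributivity and Lemma \ref{bip}, and does not invoke the Baer condition at all.
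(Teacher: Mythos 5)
Your proof is correct and matches the paper's intended derivation: the paper states this as an immediate corollary of the preceding proposition (which upgrades maximality among Baer elements to maximality in $L$), leaving the standard maximal-implies-prime argument implicit. Your explicit verification of that step, via $x\vee m=1$ and $y=yx\vee ym\leqslant m$ using distributivity and Lemma \ref{bip}, is exactly the routine argument the paper relies on.
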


We shall now aim to obtain a similar result for  prime Baer elements and semirpime Baer elements, but for $B$-multiplicative lattices. It is not known to us whether this restriction on lattices are also necessary.

\begin{proposition}\label{pebe}
Suppose $L$ is a $B$-multiplicative lattice. Then the following hold.
\begin{enumerate}
\item A Baer element $p$ in $L$ is prime if and only if, whenever $x$, $y\in \z$, and $xy\leqslant p$,  we have either $x\leqslant p$ or $y\leqslant p$.

\item A Baer element $q$ in $L$ is semiprime if and only if, whenever  $x\in \z$,  and $x^2\leqslant q$, we have $x\leqslant q$.
\end{enumerate}
\end{proposition}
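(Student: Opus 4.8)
The plan is to prove both equivalences by the same device: push an arbitrary element of $L$ into $\z$ by applying the Baer closure $\cz$, and then invoke the hypothesis, which only concerns Baer elements. In each part the forward (``only if'') direction is free, because primeness of $p$ (resp.\ semiprimeness of $q$) quantifies over all $x,y\in L$, so its restriction to $x,y\in\z$ holds automatically. The content is therefore the reverse implication, where I would use that $\cz(a)\in\z$ for every $a$ (Proposition \ref{lclk}(\ref{iclk})), that $\cz$ fixes Baer elements (Proposition \ref{lclk}(\ref{altd})), that $\cz$ is monotone (Proposition \ref{lclk}(\ref{ijcl})), and crucially that $\cz(ab)=\cz(a)\cz(b)$ in a $B$-multiplicative lattice (Proposition \ref{lclk}(\ref{clijk})).

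For (1), I would take arbitrary $a,b\in L$ with $ab\leqslant p$ and show $a\leqslant p$ or $b\leqslant p$. Applying $\cz$ and using monotonicity together with $\cz(p)=p$ gives $\cz(ab)\leqslant p$, and rewriting $\cz(ab)=\cz(a)\cz(b)$ yields $\cz(a)\cz(b)\leqslant p$ with both $\cz(a),\cz(b)\in\z$. The hypothesis then forces $\cz(a)\leqslant p$ or $\cz(b)\leqslant p$, and since $a\leqslant\cz(a)$ and $b\leqslant\cz(b)$, I conclude $a\leqslant p$ or $b\leqslant p$, so $p$ is prime.

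For (2), I would run the single-variable analogue. Starting from $a\in L$ with $a^2\leqslant q$, applying $\cz$ gives $\cz(a^2)\leqslant\cz(q)=q$, and $\cz(a^2)=\cz(a)\cz(a)=\cz(a)^2$ turns this into $\cz(a)^2\leqslant q$. As $\cz(a)\in\z$, the hypothesis yields $\cz(a)\leqslant q$, whence $a\leqslant\cz(a)\leqslant q$ and $q$ is semiprime.

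The one genuinely load-bearing step in both parts is the passage $\cz(ab)=\cz(a)\cz(b)$, which is precisely where the $B$-multiplicative assumption is consumed; after that, everything is formal bookkeeping with monotonicity, the fixed-point property $\cz(p)=p$, and extensivity $a\leqslant\cz(a)$. I do not expect any real obstacle here. (For (2) one could instead route through $\cz(a^2)=\cz(a)$ from Proposition \ref{lclk}(\ref{clan}), but using the product formula keeps the two parts uniform and makes transparent where the hypothesis on $L$ is needed.)
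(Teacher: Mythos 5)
Your proof is correct and takes essentially the same route as the paper: the identical chain $\cz(a)\cz(b)=\cz(ab)\leqslant\cz(p)=p$ via Proposition \ref{lclk}(\ref{clijk}), (\ref{ijcl}), and (\ref{altd}), followed by the hypothesis and extensivity $a\leqslant\cz(a)$, with part (2) handled as the single-variable analogue. Your parenthetical remark about routing (2) through Proposition \ref{lclk}(\ref{clan}) is a nice observation, since $\cz(a)=\cz(a^2)\leqslant\cz(q)=q$ would prove semiprimeness of any Baer element outright, needing neither the $B$-multiplicative assumption nor the stated hypothesis (consistent with Proposition \ref{epbe}(\ref{ebre}), by which Baer elements are radical).
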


\begin{proof}
(1) Let us suppose that $p$ is a Baer element satisfying the hypothesis. Now suppose $x$, $y\in L$ and $xy\leqslant p$.  Then we obtain
\[\cz(x) \cz(y) \leqslant \cz(xy) \leqslant \cz(p))=p,\]
where the first inequality follows from Lemma \ref{lclk}(\ref{clijk}), whereas the second inequality holds due to Lemma \ref{lclk}(\ref{ijcl}), and finally, the equality is due to the fact that $p$ is a Baer element (see Lemma \ref{lclk}(\ref{altd})).
By hypothesis, this implies $x\leqslant  \cz(x)\leqslant p$ or $y \leqslant\cz(y)\leqslant p.$ Hence $p$ is a prime element in $L$. Once again, the `only if' part is obvious.

(2) Similar to (1).
\end{proof}

We conclude the paper with  a similar result as above, but for meet-irreducible elements. Thanks to Proposition \ref{lclk}(\ref{tccl}), however, we do not need any further restriction (as in Proposition \ref{pebe}) on our lattices. 

\begin{proposition}
A Baer element $s$ in a multiplicative lattice $L$ is meet-irreducible if and only if, whenever $x$, $y\in \z$, and $x\wedge y \leqslant s$, we have $x\leqslant s$ or $y\leqslant s$.
\end{proposition}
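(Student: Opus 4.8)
The plan is to prove the two implications separately, with the `only if' direction being immediate and the `if' direction resting entirely on the multiplicativity-to-meet identity established in Proposition \ref{lclk}(\ref{tccl}). For the `only if' part, if $s$ is meet-irreducible in $L$ in the sense of the definition, then the defining condition holds for all $x$, $y\in L$ with $x\wedge y\leqslant s$; restricting attention to the case $x$, $y\in\z$ gives the stated weaker condition for free, so nothing needs to be done beyond observing $\z\subseteq L$.

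The substance is in the converse. First I would take arbitrary $x$, $y\in L$ with $x\wedge y\leqslant s$ and apply the Baer closure monotonically: by Proposition \ref{lclk}(\ref{ijcl}) together with the fact that $s$ is a Baer element (so $\cz(s)=s$ by Proposition \ref{lclk}(\ref{altd})), we obtain $\cz(x\wedge y)\leqslant\cz(s)=s$. Next I would invoke the key identity Proposition \ref{lclk}(\ref{tccl}), namely $\cz(x\wedge y)=\cz(x)\wedge\cz(y)$, to rewrite this as
\[
\cz(x)\wedge\cz(y)\leqslant s.
\]
Since $\cz(x)$ and $\cz(y)$ are Baer elements by Proposition \ref{lclk}(\ref{iclk}), the hypothesis applies to the pair $\cz(x)$, $\cz(y)\in\z$ and yields $\cz(x)\leqslant s$ or $\cz(y)\leqslant s$. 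Finally, using $x\leqslant\cz(x)$ and $y\leqslant\cz(y)$ (again Proposition \ref{lclk}(\ref{iclk})), I conclude $x\leqslant s$ or $y\leqslant s$, so $s$ is meet-irreducible.

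I expect no genuine obstacle here; the argument is a near-verbatim analogue of the proof of Proposition \ref{pebe}(1), with the crucial difference that it uses Proposition \ref{lclk}(\ref{tccl}) in place of Proposition \ref{lclk}(\ref{clijk}). The point worth emphasizing is precisely why the $B$-multiplicative lattice hypothesis can be dropped: the identity $\cz(x\wedge y)=\cz(x)\wedge\cz(y)$ holds in every multiplicative lattice, whereas the multiplicative version $\cz(xy)=\cz(x)\cz(y)$ needed $(c\pp)^2=c\pp$. Since meet-irreducibility is phrased in terms of $\wedge$ rather than the product, the weaker unconditional identity suffices, which is the observation the remark preceding the statement is flagging.
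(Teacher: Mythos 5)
Your proposal is correct and follows the paper's own proof essentially verbatim: both arguments pass from $x\wedge y\leqslant s$ to $\cz(x)\wedge\cz(y)=\cz(x\wedge y)\leqslant\cz(s)=s$ via Proposition \ref{lclk}(\ref{tccl}), (\ref{ijcl}), and (\ref{altd}), then apply the hypothesis to the Baer elements $\cz(x)$, $\cz(y)$ and finish with $x\leqslant\cz(x)$, $y\leqslant\cz(y)$ from Proposition \ref{lclk}(\ref{iclk}). Your closing observation about why the $B$-multiplicative lattice assumption is unnecessary here is exactly the point the paper makes in the sentence preceding the proposition.
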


\begin{proof}
Let us suppose that $s$ is a Baer element in $L$ satisfying the hypothesis. Let $b$, $b'\in L$ and  $b\wedge b'\leqslant s.$ Applying (\ref{tccl}) and  (\ref{ijcl}) from Proposition \ref{lclk}, we obtain
\[\cz(b)\wedge \cz(b')=\cz(b\wedge b')\leqslant \cz(s)=s.\]
By hypothesis, this implies that  $ \cz(b)\leqslant s$ or $ \cz(b')\leqslant s$. Finally, by Proposition \ref{lclk}(\ref{iclk}), we  have the desired claim. The proof of the converse is trivial.
\end{proof}

\smallskip
\section*{Declarations}
The authors have no funding or competing interests to declare that are relevant to the content of this article. Also, this article does not involve any data.


\smallskip

\end{document}